\newtheorem{thm}{Theorem}[section]
\newtheorem{lem}[thm]{Lemma}
\newtheorem{prop}[thm]{Proposition}
\newtheorem{cor}[thm]{Corollary}
\newtheorem{assu-nota}[thm]{Assumption--Notation}
\theoremstyle{remark}
\newtheorem{ex}[thm]{Example}
\newtheorem{remark}{Remark}
\newcommand{\C}{\mathbb C}
\newcommand{\Z}{\mathbb Z}
\newcommand{\Q}{\mathbb Q}
\newcommand{\pp}{\mathbb P}
\DeclareMathOperator{\Aut}{Aut}
\DeclareMathOperator{\Pic}{Pic}
\DeclareMathOperator{\NS}{NS}
\DeclareMathOperator{\Hom}{Hom}
\DeclareMathOperator{\Fix}{Fix}
\DeclareMathOperator{\tors}{Tors}
\DeclareMathOperator{\Spec}{Spec}
\newcommand{\OO}{\mathcal O}
\newcommand{\epsi}{\epsilon}
\newcommand{\cF}{{\mathcal F}}
\newcommand{\cQ}{{\mathcal Q}}
\newcommand{\inv}{^{-1}}
\newcommand{\wt}{\widetilde}
\newcommand{\can}{_{\text{can}}}
\numberwithin{equation}{section}
\begin{document}
\title{Godeaux surfaces with an Enriques involution and some stable degenerations}

\author{Margarida Mendes Lopes}
\author{Rita Pardini}
\address{Rita Pardini\\Dipartimento di Matematica\\Universit\`a di Pisa \\Largo B. Pontecorvo 5\\I-56127  Pisa\\Italy}
\email{pardini@dm.unipi.it}
\address{Margarida Mendes Lopes\\Centro de An\'alise Matem\'atica, Geometria e Sistemas Din\^amicos, Departamento de Matem\'atica\\
Instituto Superior T\'ecnico, Universidade de Lisboa\\ Av. Rovisco Pais\\ 1049-001  Lisboa\\Portugal}
\email{mmlopes@math.tecnico.ulisboa.pt}

\thanks{{\it Mathematics Subject Classification (2000)}: 14J10, 14J29, 14F35. \\
  The first  author is a member of the Center for Mathematical
Analysis, Geometry and Dynamical Systems of Instituto Superior T\'ecnico, Universidade de Lisboa. The second author is a  member of G.N.S.A.G.A.--I.N.d.A.M.  This research was partially supported by FCT (Portugal) through program POCTI/FEDER and   Projects  PTDC/MAT-GEO/0675/2012 and EXCL/MAT-GEO/0222/2012 and by MIUR (Italy) through  PRIN 2010-11 ``Geometria delle variet\`a algebriche".}

\date{}

\begin{abstract}
We give an explicit description of the Godeaux surfaces $S$ (minimal surfaces of general type with $K^2_S=\chi(\OO_S)=1$) that admit an involution $\sigma$ such that $S/\sigma$ is birational to an Enriques surface;  these surfaces give  a $6$-dimensional unirational irreducible  
subset 
  of  the moduli space of surfaces of general type.\\
In addition, we describe the Enriques surfaces that  are birational to the  quotient of a Godeaux surface by an involution and we show that they give a  $5$-dimensional unirational irreducible  subset of the moduli space of Enriques surfaces.\\
Finally, by   degenerating our  description we obtain  some examples of  non-normal stable Godeaux surfaces;  in particular we show that one of the families of stable Gorenstein Godeaux surfaces classified in \cite{FPR14c} consists of smoothable surfaces.

\medskip

\noindent{\em 2000 Mathematics Subject Classification:} 14J29, 14J28,  14J10. 
\end{abstract}

\maketitle
\setcounter{tocdepth}{1}
\tableofcontents
\section{Introduction}

A Godeaux surface is (the canonical model of) a minimal surface of general type with $K^2_S=\chi(\OO_S)=1$. These surfaces have been intensely studied since the 1970's,  but a complete classification is still lacking. A very synthetic  summary of the state of the art is   as follows:
\begin{itemize}
\item[---] the algebraic fundamental group $\pi_1^{\rm alg}$ of a Godeaux surface is cyclic of order $\le 5$ ((\cite{Mi75}); in particular  if $S$ is a Godeaux surface then  $\pi_1^{\rm alg}$ is abelian and thus it  coincides with the torsion subgroup $\tors(S)$ of $\Pic(S)$; \item[---] the Godeaux surfaces with $\pi_1^{\rm alg}$ of order $3,4,5$ are explicitly described; to each of these possibilities for $\pi_1^{\rm alg}$ there corresponds an irreducible unirational $8$-dimensional connected component of the moduli space (\cite{reid-godeaux});
\item[---] Godeaux surfaces with $\pi_1^{\rm alg}=0$ or $\Z_2$ do exist, but little is known about the geometry of the moduli space (\cite{barlow1}, \cite{barlow2}, \cite{Lee07}, \cite{PPS13}).
\end{itemize}
 
 One of the strategies to overcome the difficulties of the classification is to restrict one's attention to a subclass of Godeaux surfaces with an extra structure, for instance those admitting an involution. This has been done  by  Keum-Lee (\cite{keum-lee00}) and  by Calabri, Ciliberto and Mendes Lopes (\cite{CaCiML07}), who described  the possibilities for the quotient surface and the fixed locus of the involution. 
 
 Here we study in detail the case when the quotient surface is birational to an Enriques surface (in this case, we call $\sigma$ an ``Enriques involution''). Since in this case $\tors(S)\cong \Z_4$  (\cite{CaCiML07}), the    universal  cover $\wt S$ of the Godeaux surface is  a complete intersection in a weighted projective space (\cite{reid-godeaux}). The involution $\sigma$ lifts to an involution $\wt \sigma$ of $\wt S$ and the action of $\wt \sigma$ on the canonical ring of $\wt S$ can be determined by means of a careful study of linear systems on the quotient Enriques surface, yielding the classification (Theorem \ref{thm:main-godeaux}). As a consequence, the locus of Godeaux surfaces with an Enriques involution is irreducible of dimension 6 (Corollary \ref{cor:Godeaux-moduli}) and the locus of Enriques surfaces that are birational to the quotient of a Godeaux surface by an involution (Enriques surfaces ``of Godeaux-quotient type'') is irreducible of dimension 5 (Corollary \ref{cor:irreducible}). 
In \S\ref{sec:enriques} we specialize a classical construction   of the special Enriques surfaces  (\cite{horikawa_periods_I},(\cite{horikawa_periods_II}) to obtain Enriques surfaces of Godeaux-quotient type: since our   construction depends on 5 parameters, by the irreducibility of the locus of Enriques surfaces of Godeaux-quotient type it gives the general Enriques surface of Godeaux-quotient type. 
\smallskip

The moduli space of (canonical models) of  surfaces of general type can be compactified by considering a larger class of surfaces, the so-called stable surfaces (cf.   \S \ref{sec:deg} for the definition). The stable  Gorenstein surfaces with $K^2=1$ (thus including the stable Gorenstein Godeaux surfaces)  are investigated in the series of recent papers \cite{FPR2014a}, \cite{FPR2014b}, \cite{FPR14c}. In \S \ref{sec:deg} we give an explicit construction of the general Godeaux surface with an Enriques involution and use it to produce stable Godeaux surfaces. In this way we produce  a normal Gorenstein degeneration with an elliptic singularity of degree 4, 
whose existence was predicted in \cite{FPR2014a},   and  we show the smoothability of one of the families of non-normal Godeaux surfaces with normalization isomorphic to $\pp^2$ (\cite{FPR2014a}, \cite{FPR14c}). In addition we give examples of stable non-normal Godeaux surfaces with Cartier index equal to 2 whose normalization is not ruled, thus showing that the main result of \cite{FPR2014a} does not hold without the Gorenstein assumption.
\smallskip

Finally, a remark on the methods: the constructions of the general Enriques surface of Godeaux-quotient type (\S \ref{sec:enriques})   and of the general Godeaux surface with and Enriques involution (\S \ref{sec:deg}) are based:
\begin{itemize}
\item[(a)] on the fact that, for  a certain involution $\tau$ of $Y$ and for a certain   double/bidouble cover $p\colon X\to Y$, $\tau$ can be lifted to an \underline{involution}  of $X$; 
\item[(b)] on the fact that  the $2$-divisibility of $p^*D$ for a certain divisor $D$ on $Y$ implies that $D$ is also $2$-divisible. 
\end{itemize}
The conditions under which   (a) and (b) above hold for a general bidouble cover are investigated  in \S 3: we believe that this section is  of independent interest. 
\bigskip

\noindent{\em Acknowledgments:} we are grateful to the editors  of   this volume for inviting us to contribute to it. We hope  that, although  the topic is   not directly related to the work of Corrado Segre, the influence of the  classical italian tradition of algebraic geometry that  pervades the paper  makes it   a suitable addition to this project.

\bigskip

{\bf Notation and conventions:} We work over the complex numbers. Following the terminology of \cite{SMMP},  a variety is called {\em demi-normal} if it satisfies condition $S_2$ of Serre and in codimension 1 it is either smooth or double crossings. If $X$ is a demi-normal projective variety, then the dualizing sheaf $\omega_X$ is divisorial;   we denote by $K_X$ a canonical divisor, that is,  a Weil divisor such that $\OO_X(K_X)\cong \omega_X$.
For a  projective  variety $X$  we denote by $\tors(X)$ the  torsion subgroup of $\Pic(X)$ and by $\Pic(X)[d]$ the subgroup consisting of the  $d$-torsion elements. 
We use $\equiv$ to denote linear equivalence of divisors and $\sim$ to denote numerical equivalence of $\Q$-divisors.

Thoughout all the paper $G$ is used to denote  the Galois group of a finite cover. 

\section{Galois covers and divisibility}
In this section we first  summarize  the theory of \cite{rita-abel} and \cite{rita-valery_non-normal} for covers with Galois group $\Z_2$ and $\Z_2^2$; the need to cover  also the case of non-normal covers arises because in   \S \ref{sec:deg} we consider  stable Godeaux surfaces.
 
Then  we present some general results on liftability of automorphisms to double and bidouble covers that are needed in the rest of the paper.  Although these results are probably known to experts, to our knowledge they have not been written down elsewhere and we believe that they are of independent interest. 

\subsection{Double and bidouble covers}\label{ssec:bidouble}
Let $G$ be a finite group. A $G$-cover  is a finite  map of  algebraic varieties  $f\colon X\to Y$  that is  the quotient map for a generically faithful $G$-action, namely such that for every component $Y_i$ of $Y$ the $G$-action on the restricted cover $X\times_Y Y_i\to Y_i$ is faithful. The cover is {\em abelian} if $G$ is an abelian group: for  the general theory of abelian covers   we refer the reader  to \cite{rita-abel} for the case  $X$  normal  and $Y$ smooth and to \cite{rita-valery_non-normal} for a more general treatment. 

Here we are mainly interested  in the case $G\cong \Z_2$ (``double covers'') and $G\cong \Z_2^2$ (``bidouble covers''); for simplicity, we assume throughout that $H^0(Y,\OO_Y)=\C$.
 
 Assume first that $f\colon X\to Y$ is an abelian cover with group $G$ such that   $X$ is normal   and $Y$ is smooth. Then  $f$  is flat and  the branch locus  is a   divisor; we denote by  $B$ the branch divisor with  reduced structure. For $G=\Z_2$,  we have $f_*\OO_X=\OO_Y\oplus L\inv$, where $L$ is a line bundle, $G$ acts on $L\inv $ as multiplication by $-1$ and the multiplication map $L\inv \otimes L^{\inv}\to \OO_Y$ induces an isomorphism $L^{\otimes 2}\cong \OO_Y(B)$. The pair $(L, B)$ is called the   {\em building  data} of the double cover  and it determines $f\colon X\to Y$ uniquely up to isomorphism of covers, since we assume $H^0(\OO_Y)=\C$. We say for short that $f\colon X\to Y$ is the double cover given by the equivalence relation $2L\equiv B$. 
 
 One can reverse this  construction: given building data $(L,B)$, i.e. given an effective divisor $B$ and a line bundle $L$ satisfying the relation $2L\equiv B$, one can choose an isomorphism $\phi\colon L^{\otimes 2}\to  \OO_Y(B)$,  use it to define an associative multiplication on $\OO_Y\oplus L\inv$, set  $X:=\Spec(\OO_Y\oplus L\inv)$ and take $f$ to be  the natural map $X\to Y$. This construction makes sense more generally  for any effective Cartier divisor $B$ (not necessarily reduced) and  line bundle $L$ such that $2L\equiv B$ on an arbitrary variety $Y$. The flat  double cover $f\colon X\to Y$  is   called  the {\em standard cover} associated with  $(L,B)$; it is not hard to show that every  flat double cover is  obtained this way, i.e., it is standard.
 
 The situation is similar for   bidouble covers. We start again by considering the case $X$ normal and $Y$ smooth.   We  write $\chi_1, \chi_2, \chi_3$ for the three non-trivial characters of $G\cong \Z_2^2$ and denote by $g_i\in G$ the generator of $\ker \chi_i$. The branch divisor $B$ decomposes as $B=B_1+B_2+B_3$, where $B_i$ is the image of the divisorial part of the fixed locus of $g_i$ and 
we have a splitting  $f_*\OO_X=\OO_Y\oplus L_1\inv \oplus L_2\inv \oplus L_3\inv$, where $G$ acts on  $L_i\inv$ as multiplication by  the character $\chi_i$. As in the case of double covers, the multiplication in $f_*\OO_X$  induces isomorphisms, and therefore  equivalence relations:
\begin{equation}\label{eq:fundrel}
2L_i\equiv B_j+B_k,\quad L_i+L_j\equiv L_k+B_k,
\end{equation} 
where $(i,j,k)$ is a permutation of $(1,2,3)$.  Again,    $(L_i, B_i)$, $i=1,2,3$, are called the building data of the bidouble  cover and determine $f\colon X\to Y$ up to isomorphism of $\Z_2^2$-covers. It is easy to see that \eqref{eq:fundrel} is equivalent to the smaller set of equations:
\begin{equation}\label{eq:redrel}
2L_1\equiv B_2+B_3, \quad 2L_2\equiv B_1+B_3,\quad L_3\equiv L_1+L_2-B_3,
\end{equation}
and in particular $L_3$ can be recovered from the remaining data. We call $(L_1,L_2, B_1,B_2,B_3)$ the {\em reduced building data} and we say
 for short that the cover is given by the relations $2L_1\equiv B_2+B_3$, $2L_2\equiv B_1+B_3$. 

As in the case of double covers, we can perform  the  reverse construction  in greater generality, starting with  line bundles $L_1, L_2$ and  effective Cartier divisors satisfying \eqref{eq:redrel}, and  obtain a   {\em standard bidouble cover} of an arbitrary variety $Y$. 
Again, the building data determine the standard cover uniquely up to isomorphism of bidouble covers, since we assume $H^0(\OO_Y)=\C$. 
We set $B=B_1+B_2+B_3$; observe that $B$ may be non-reduced. 
We recall the following:
\begin{prop}[\cite{rita-valery_non-normal}, Cor.~1.10]
Let $f\colon X\to Y$ be a  a double or bidouble cover  with $Y$ smooth and $X$ demi-normal. 
 Then   $f$ is a standard cover  and every component of $B$ has multiplicity at most 2.
\end{prop}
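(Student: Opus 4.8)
The plan is to prove the two assertions separately: first that demi-normality of $X$ forces $f$ to be flat (hence standard), and then, working one prime divisor at a time, that demi-normality bounds the branch multiplicities.

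\emph{Flatness.} Since $Y$ is smooth with $H^0(\OO_Y)=\C$, it is irreducible and its local rings are regular, hence unique factorization domains; in particular a reflexive rank-$1$ sheaf on $Y$ is invertible. I would use that, in characteristic $0$ and for abelian $G$, the $\OO_Y$-linear $G$-action splits $f_*\OO_X=\bigoplus_\chi(f_*\OO_X)_\chi$ into isotypic summands indexed by the characters $\chi$ of $G$; since $f$ is generically a $G$-torsor each summand has rank $1$, and since $X$ is reduced and equidimensional over the irreducible $Y$ each summand is torsion-free over $\OO_Y$. Because $X$ is demi-normal, $\OO_X$ satisfies Serre's $S_2$, and since depth and dimension are preserved under a finite morphism, $f_*\OO_X$ — hence each of its direct summands — is $S_2$ over $\OO_Y$. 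A rank-$1$ torsion-free $S_2$ sheaf on a normal variety is reflexive, so by the first remark each $(f_*\OO_X)_\chi$ is invertible; writing it $L_\chi\inv$, we conclude that $f_*\OO_X$ is locally free, i.e.\ $f$ is flat. The multiplication on $f_*\OO_X$ then provides the building data ($B$, resp.\ $B_1,B_2,B_3$, being the zero divisor of the map $(f_*\OO_X)_\chi\tensor(f_*\OO_X)_\chi\to\OO_Y$), so $f$ is a standard cover. (When $\dim X=2$, as in our applications, one may instead simply note that demi-normal means Cohen--Macaulay and that a finite morphism from a Cohen--Macaulay scheme to a regular one is flat.)

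\emph{Multiplicities.} Fix a prime divisor $D\subset Y$, let $\eta_D$ be its generic point and $R=\OO_{Y,\eta_D}$, a discrete valuation ring with uniformizer $t$; let $\xi\in X$ lie over $\eta_D$, with inertia group $I\subseteq G$. Near $\xi$ the cover $f$ is an \'etale quotient by $G/I$ of the $I$-cover over $\Spec R$, so it suffices to test demi-normality of this $I$-cover. If $I$ is cyclic — automatic for a double cover, and for a bidouble cover exactly when $D$ lies in a single $B_i$ — this cover is $\Spec R[z]/(z^2-ut^m)$ with $u\in R^*$ and $m=\operatorname{mult}_D B$ (which then equals $\operatorname{mult}_D B_i$). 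When $m\ge 3$ this ring is either a non-normal $1$-dimensional domain, hence unibranch and singular, or — when $m$ is even and $u$ a square in $R$ — two regular branches glued along $V(t^{m/2})$ with $m/2\ge 2$; neither is an ordinary double crossing, contradicting demi-normality of $X$ in codimension $1$. Hence $m\le 2$. The only remaining case, $G\cong\Z_2^2$ with $I=G$ (so $D$ is a common component of two, or all three, of the $B_i$), is settled by inspecting the corresponding finite list of local models $\Spec R[w_1,w_2,w_3]/(\cdots)$, with $w_i^2$ a unit times a power of $t$ and the usual cross-relations $w_iw_j=(\text{unit})\,t^{\operatorname{mult}_D B_k}w_k$: one checks directly that demi-normality forces $\operatorname{mult}_D B=\sum_i\operatorname{mult}_D B_i\le 2$. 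Running over all $D$ yields the statement.

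The main obstacle is this last local classification: one has to keep track of the class of $u$ in $R^*/(R^*)^2$ — it is exactly a multiplicity-$2$ component with $u$ a non-square that breaks demi-normality, producing a unibranch rather than a nodal singularity — and, for bidouble covers, enumerate the possible inertia configurations over a codimension-$1$ point and verify that only the nodal local models survive. The flatness part, by contrast, is formal once the splitting of $f_*\OO_X$ into invertible isotypic summands is available.
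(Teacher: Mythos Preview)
The paper does not supply its own proof of this proposition: it is quoted verbatim from \cite{rita-valery_non-normal}, Cor.~1.10, and no argument is given here. So there is nothing to compare your approach against directly; I can only comment on whether your outline would succeed.

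Your flatness argument is correct and is essentially the standard one: the $G$-eigensheaf decomposition of $f_*\OO_X$, together with the $S_2$ property inherited from demi-normality and the fact that reflexive rank-$1$ sheaves on a smooth variety are invertible, gives local freeness and hence flatness; the multiplication maps then furnish the building data. The codimension-$1$ analysis for the cyclic-inertia case is also right: $R[z]/(z^2-ut^m)$ with $m\ge 3$ is never an ordinary node, so demi-normality forces $m\le 2$.

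There is, however, a genuine slip in your closing paragraph. You write that ``a multiplicity-$2$ component with $u$ a non-square breaks demi-normality, producing a unibranch rather than a nodal singularity.'' This is not so: demi-normality is tested in the \emph{\'etale} (or analytic) topology, and after an \'etale base change $R\to R[\sqrt{u}]$ the ring $R[z]/(z^2-ut^2)$ becomes $(z-\sqrt{u}\,t)(z+\sqrt{u}\,t)=0$, an honest node. So the class of $u$ in $R^*/(R^*)^2$ is irrelevant to the multiplicity bound; multiplicity $2$ is always admissible. This error does not affect the validity of your main argument (which only needs that $m\ge 3$ fails), but it does indicate a confusion about where the \'etale-local condition enters.

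Finally, the bidouble case with full inertia $I=G$ is dispatched with ``one checks directly,'' which is fair as a sketch but is exactly where the work lies; the cited reference carries out this local classification in detail.
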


\subsection{Lifting automorphisms to double and  bidouble covers}\label{ssec:lift}
We discuss in detail the case of bidouble covers; the case of double covers can be treated by similar, but simpler, arguments. 

Let $Y$ be a variety with $H^0(Y,\OO_Y)=\C$, let  $f\colon X\to Y$ be a  standard  bidouble cover  given by relations $2L_1\equiv B_2+B_3$ and $2L_2\equiv B_1+B_3$  and denote by $G\cong \Z_2^2$ the Galois group of $f$. Let   $\rho\in \Aut(Y)$ be an automorphism such that one of the following holds:
\begin{enumerate}[(a)]
\item $\rho^*B_i=B_i$, $i=1,2,3$,  and $\rho^*L_j\equiv L_j$, $j=1,2$
\item $\rho^*B_1=B_2$, $\rho^*B_2=B_1$, $\rho^*B_3=B_3$, $\rho^*L_1\equiv L_2$, $\rho^*L_2\equiv L_1$.
\end{enumerate}
In either case, the automorphism $\rho$ lifts to an automorphism $\wt \rho$ of $X$. Indeed, consider the following cartesian diagram:
\begin{equation}
\begin{CD}
X'@>\rho'>>X\\
@V{f'} VV @VV f V\\
Y@> \rho >>Y.
\end{CD}
\end{equation}
In case (a), $f'$ is a  standard bidouble cover given by the same building data  as $f$, hence it is isomorphic to $f$ via an isomorphism  compatible with the  action of $G\cong \Z_2^2$ and $\wt \rho$ is obtained by composing such an isomorphism with $\rho'$; 
in case (b)  we modify the $G$-action on $X'$ by composing with the automorphism of $G$ that switches  $g_1$ and $g_2$ and argue as in case (a). 

Let $\wt G$ be the subgroup of $\Aut(X)$ generated by $G$ and by $\wt \rho$. Then there is a short exact sequence of groups:
\[1\to G\to \wt G \to <\rho>\to 1.\]
The group $\wt G$ is abelian in case (a), since $\wt \rho$ preserves the decomposition of $f_*\OO_X$ into $G$-eigensheaves,  and it is non abelian in case (b); in particular,  if $\rho^2=1$ then ${\wt \rho}\,^4=1$ and, by the classification of groups of order 8,  $\wt G$ is isomorphic either to $\Z_2^3$ or $\Z_2\times \Z_4$ in case (a) and to the dihedral group $D_4$ in case (b). 

In the case of  double covers one assumes that $\rho^*B=B$ and  $\rho^*L\equiv L$: in this case $\wt \rho$ commutes with the action of $G\cong \Z_2$ and   the group $\wt G$ is  isomorphic to $\Z_2\times \Z_d$ or to $\Z_{2d}$.

\subsection{Divisibility}
Recall  that a Cartier divisor or line bundle on a projective variety is said to be  {\em even} if its class is divisible by $2$ in $\Pic(X)$.
\begin{lem}\label{lem:div} Let $f\colon X\to Y$ be a cyclic \'etale cover  of projective varieties and let   $K$ be the kernel of $f^*\colon \Pic(Y)\to \Pic(X)$.
Let $D$ be a Cartier divisor on $Y$ such that $f^*D$ is even. 

If $\Pic(X)[2]=0$, then the class of $D$ is divisible by 2 in $\Pic(Y)/K$.
\end{lem}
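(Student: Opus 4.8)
The plan is to pull back a putative ``half'' of the class of $D$ to $X$, observe that this pulled-back class is Galois-invariant, descend it back to $Y$, and then read off the divisibility of $D$ modulo $K$.

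Write $n=\deg f$, let $G\cong\Z_n=\langle g\rangle$ be the Galois group (so $Y=X/G$), and recall that $K=\ker\bigl(f^*\colon\Pic(Y)\to\Pic(X)\bigr)$ by definition. First I would pick $E\in\Pic(X)$ with $f^*D\equiv 2E$ and check that $E$ is $G$-invariant: since $f\circ g=f$ one has $2\,g^*E\equiv g^*f^*D\equiv f^*D\equiv 2E$, so $g^*E-E\in\Pic(X)[2]=0$; hence $g^*E\equiv E$, i.e.\ $E\in\Pic(X)^G$. This is the only step that uses the hypothesis $\Pic(X)[2]=0$, and it clearly cannot be dispensed with.

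The main step --- and the only one I expect to require any thought --- is to descend $E$ from $X$ to $Y$. Since $f$ is a finite \'etale $G$-torsor and $H^0(X,\OO_X^*)=\C^*$ ($X$ being a connected projective variety), the Hochschild--Serre spectral sequence for $\mathbb{G}_m$ gives an exact sequence
\[
0\longrightarrow \Hom(G,\C^*)\longrightarrow \Pic(Y)\xrightarrow{\ f^*\ }\Pic(X)^G\longrightarrow H^2(G,\C^*);
\]
equivalently, a $G$-invariant line bundle on $X$ admits a $G$-linearization (and therefore descends to a line bundle on $Y$) as soon as its obstruction class in $H^2(G,\C^*)$ vanishes. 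The point where \emph{cyclicity} of the cover is indispensable is precisely here: for $G\cong\Z_n$, periodicity of the cohomology of a cyclic group gives $H^2(G,\C^*)\cong\C^*/(\C^*)^n$, which is $0$ because $\C^*$ is divisible. (For $G\cong\Z_2^2$ one would instead get $H^2(G,\C^*)\cong\Z_2\neq0$, and the statement would indeed fail in general.) Consequently $E\equiv f^*M$ for some $M\in\Pic(Y)$.

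Finally, $f^*(D-2M)\equiv f^*D-2E\equiv 0$, so $D-2M\in K$; that is, the class of $D$ equals $2M$ in $\Pic(Y)/K$, as claimed. Everything outside the descent step is purely formal, so the only real work is the cohomological input $H^2(\Z_n,\C^*)=0$ for cyclic $G$ together with the standard fact that a $G$-invariant bundle with vanishing obstruction descends along the torsor $f$.
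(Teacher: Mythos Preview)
Your proof is correct and follows essentially the same approach as the paper: choose a square root $E$ of $f^*D$ on $X$, use $\Pic(X)[2]=0$ to see it is $G$-invariant, descend it to $Y$ using cyclicity of $G$, and read off the divisibility of $D$ modulo $K$. The only difference is in the justification of the descent step: the paper simply says that a $G$-invariant line bundle admits a $G$-linearization because $G$ is cyclic (the classical argument: choose any isomorphism $\phi\colon E\to g^*E$, note that $\phi^n$ is multiplication by some $c\in\C^*$, and rescale $\phi$ by an $n$-th root of $c$), whereas you phrase the same obstruction cohomologically via Hochschild--Serre and the computation $H^2(\Z_n,\C^*)=0$; these are two ways of saying the same thing.
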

\begin{proof} Let $\wt M\in \Pic(X)$ be a line bundle such that $2\wt M\equiv f^*D$. Denote by $g$ a generator of the Galois group $G$  of $f$; since $D$ is $g$-invariant, we have $2g^*\wt M\equiv f^*D\equiv 2\wt M$. Since $\Pic(X)[2]=0$ it follows that the lines bundles $\wt M$ and $g^*\wt M$ are isomorphic and therefore $\wt M$ admits a $G$-linearization ($G$ is cyclic). Since $f$ is \'etale, $\wt M$ descends to a line bundle $M$ on $Y$. One has $f^*(2 M-D)\equiv 0$, hence $D=2 M$ in $\Pic(Y)/K$.
\end{proof}
\begin{lem}\label{lem:div-geo} Let $f\colon X\to Y$ be a cyclic \'etale cover  of degree $d$ of projective  varieties and let $D$ be  an  effective Cartier  divisor on $Y$ such that $f^*D$ is even.   Assume that $\Pic(X)[2]=0$ and denote by $h\colon Z\to X$ the flat double cover branched on $f^*D$. Then:
\begin{enumerate}
\item the composite map $f\circ h\colon Z\to Y$ is a Galois cover with Galois group $\wt G$ isomorphic  to $\Z_{2d}$ or to $\Z_2\times \Z_d$;
\item $D$ is even iff   $\wt G$ is isomorphic to $\Z_2\times \Z_d$.
\end{enumerate}
\end{lem}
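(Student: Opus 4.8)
The plan is to lift the Galois group $G\cong\Z_d$ of $f$ to automorphisms of $Z$, read off $\widetilde G$, and then—for part (2)—detect from the structure of $\widetilde G$ whether $f\circ h$ factors through a double cover of $Y$ whose branch divisor is $D$.

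\emph{Part (1).} Since $f^*D$ is $G$-invariant as a divisor (it is pulled back from $Y$) and $\Pic(X)[2]=0$, the line bundle $\widetilde M$ is the \emph{unique} square root of $f^*D$, so $g^*\widetilde M\cong\widetilde M$ for a generator $g$ of $G$; as in the proof of Lemma~\ref{lem:div}, $\widetilde M$ therefore admits a $G$-linearization. Thus the building data $(\widetilde M,f^*D)$ of the standard double cover $h$ satisfy the hypotheses of \S\ref{ssec:lift} (double-cover case) with $\rho=g$, so $g$ lifts to an automorphism $\widetilde g$ of $Z$. Let $\widetilde G\subseteq\Aut(Z)$ be generated by the Galois involution $\tau$ of $h$ and by $\widetilde g$; by \S\ref{ssec:lift}, $\widetilde G\cong\Z_{2d}$ or $\Z_2\times\Z_d$. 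Moreover $Z/\langle\tau\rangle=X$, and $\widetilde g$ descends to $g$ on $X$, so $Z/\widetilde G=X/G=Y$ and $f\circ h$ is the corresponding quotient map, i.e.\ a Galois cover with group $\widetilde G$; this proves (1).

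\emph{Part (2), $D$ even $\Rightarrow \widetilde G\cong\Z_2\times\Z_d$.} Write $D\equiv 2M$ and let $p\colon W\to Y$ be the standard double cover with building data $(M,D)$. Its base change along $f$ is the standard double cover of $X$ with data $(f^*M,f^*D)$; since $2\widetilde M\equiv f^*D\equiv 2f^*M$ and $\Pic(X)[2]=0$ force $\widetilde M\cong f^*M$, this base change is isomorphic to $h$, i.e.\ $Z\cong W\times_Y X$ over $X$. Hence $f\circ h$ is the fibre product over $Y$ of the $\Z_2$-cover $p$ and the \'etale $\Z_d$-cover $f$; as $p$ is ramified (we may assume $D\neq 0$) while $f$ is unramified, these covers admit no common intermediate cover, so $W\times_Y X$ is connected and Galois over $Y$ with group $\Z_2\times\Z_d$. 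By (1) this Galois group is $\widetilde G$.

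\emph{Part (2), $\widetilde G\cong\Z_2\times\Z_d\Rightarrow D$ even.} Here $\widetilde G=\langle\tau,\widetilde g\rangle$, and $\widetilde G$ contains a subgroup $H\cong\Z_d$ complementing $\langle\tau\rangle=\mathrm{Gal}(h)$: one takes $H=\langle\widetilde g\rangle$ when $\widetilde g$ has order $d$, and $H=\langle\widetilde g^{2}\rangle$ when $d$ is odd and $\widetilde g$ has order $2d$ (the remaining possibility, $\widetilde g$ of order $2d$ with $d$ even, would make $\widetilde G$ cyclic of order $2d\not\cong\Z_2\times\Z_d$). Put $V:=Z/H$. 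Since $\widetilde G=\langle\tau\rangle\times H$, Galois theory gives $Z\cong X\times_Y V$, so $h$ is the base change along $f$ of the degree-two cover $q\colon V\to Y$. Using faithfully flat descent along the \'etale map $f$, the $(-1)$-eigensheaf of $q_*\OO_V$ is a line bundle, so $q$ is a standard double cover, and its building data pull back to $(\widetilde M,f^*D)$; since $f^*$ is injective on divisors, the branch divisor of $q$ equals $D$. In particular $D$ is even, which completes (2).

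I expect the main obstacle to be the last step: passing from the purely group-theoretic hypothesis $\widetilde G\cong\Z_2\times\Z_d$ to an honest intermediate double cover $q\colon V\to Y$ and verifying that its branch divisor is precisely $D$, and not merely something linearly equivalent to $D$ modulo $\ker f^*$ (as Lemma~\ref{lem:div} alone would give). This forces one to choose the complement $H$ carefully, so that $Z= X\times_Y V$ holds on the nose, and—since $Y$ need not be smooth—to treat ``double cover'' and ``branch divisor'' through the algebra $q_*\OO_V$ together with descent along $f$, rather than through the smooth-case formalism.
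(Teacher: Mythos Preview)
Your proof is correct and follows essentially the same route as the paper's: lift $g$ via the invariance of the building data (part (1)); for $D$ even $\Rightarrow \wt G\cong\Z_2\times\Z_d$, realize $Z$ as the fibre product of $f$ with the double cover given by $2M\equiv D$; and for the converse, pass to the intermediate quotient by a $\Z_d$-subgroup and read off the branch divisor.

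The only substantive difference is in how the implication $\wt G\cong\Z_2\times\Z_d\Rightarrow D$ even is organised. The paper chooses directly an element $\wt g$ of order $d$ lifting $g$ and observes that, since $g$ acts freely on $X$, the lift $\wt g$ acts \emph{freely} on $Z$; hence $Z\to Z/\wt g$ is \'etale and $Z/\wt g\to Y$ is immediately a flat (hence standard) double cover whose branch divisor is forced to be $D$. Your argument instead invokes ``Galois theory'' to get $Z\cong X\times_Y V$ and then faithfully flat descent. This is fine, but note that the fibre-product identification is not automatic for ramified covers: what makes it work here is precisely the freeness of the $H$-action on $Z$ (equivalently, that $H\to G$ is an isomorphism onto the group acting freely on $X$), which makes $Z\to V$ \'etale and lets you compare $Z$ and $X\times_Y V$ as \'etale $V$-schemes of the same degree. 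You use this implicitly; the paper isolates it explicitly, which makes the argument a bit shorter and sidesteps the descent machinery.
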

Notice that Lemma \ref{lem:div-geo} is interesting only if $d$ is even. Indeed, if $d$ is odd then $\wt G\cong\Z_{2d}\cong \Z_2\times \Z_d$ is cyclic and statement (ii) just says that $D$ is even, as we already know by Lemma \ref{lem:div}.

\begin{proof}
(i) Let $\wt L\in \Pic(X)$ be the only element such that $2\wt L\equiv f^*D$. Let $g$ be a generator of the Galois group $G$  of $f$; by construction $f^*D$ is $G$-invariant, hence arguing as in the proof of Lemma \ref{lem:div} one sees that $\wt L$ is also $G$-invariant. Therefore  by the discussion of  \S \ref{ssec:lift} it is possible to lift $g$ to an automorphism $\wt g$ of $Z$ and the subgroup $\wt G$ of  $\Aut(Z)$ generated by $\wt g$ and   by  the involution $\iota$ associated with $h$ is isomorphic to $\Z_{2d}$ or $\Z_2\times \Z_d$.  The former case occurs iff $\wt G$ is generated by $\wt g$ or by $\wt g\iota$. Clearly, $\wt G$ is the Galois group of $f\circ h$. \medskip

(ii) Assume that $\wt G\cong \Z_2\times \Z_d$ and let $\wt g$ be an element of order $d$ that lifts $g$: then  $\wt g $ acts freely on $Z$ by construction and $Z/\wt g\to Y$ is a flat  double cover. Since $Z\to Z/\wt g$ is \'etale, it is easy to see that $Z/\wt g\to Y$ is standard with building data $(L,D)$, for some  $L\in \Pic(Y)$,   hence $D$ is even. 
Conversely, assume that $D$ is even and let $ L\in \Pic(Y)$ be  such that $2 L\equiv D$.  We have $f^* L=\wt L$ since $\Pic(X)[2]=0$ and therefore $Z\to Y$ is the fiber product of  $f\colon X\to Y$ and  of the double cover given by the relation $2 L\equiv D$  and has Galois group isomorphic to $\Z_2\times \Z_d$.
\end{proof}
Let $X$ be  a  surface and let $p_1,\dots,p_k\in X$ be $A_1$ singularities (``nodes''). We say that $p_1, \dots ,p_k$ is an {\em even set of nodes} of $X$  if there exists a double cover of $X$ branched precisely on $p_1,\dots, p_k$. Denote by $X'\to X$ the minimal resolution of the singularities $p_1,\dots p_k$  and by $C_i$ the exceptional curve over $p_i$; $C_i$ is a {\em nodal} curve, i.e., it is smooth rational and $C_i^2=-2$. The set $\{p_1,\dots p_k\}$ is  even if and only if $C_1+\dots +C_k$ is an even divisor of $X'$. By using the adjunction formula on $X'$ it is easy to check that an even set of nodes has cardinality divisible by 4.
\begin{lem} \label{lem:even}
Let $Y$ be a  smooth projective surface, let $B_1$, $B_2$ be  even  curves of $Y$ meeting transversely at  smooth points $q_1,\dots q_k$ of $Y$. 

If   $f\colon X\to Y$ is  a flat double cover branched on $B:=B_1+B_2$, then  the points $p_1,\dots p_k$ lying above $q_1,\dots ,q_k$ are an even set of nodes of $X$.
\end{lem}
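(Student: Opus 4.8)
The plan is to produce the required double cover of $X$ by promoting $f\colon X\to Y$ to a $\Z_2^2$-cover of $Y$. As building data of the standard double cover $f$ we have a line bundle $L$ with $2L\equiv B_1+B_2$. Since $B_1$ is even, write $B_1\equiv 2M$ and set
\[
L_1:=L-M,\qquad L_2:=M,\qquad B_3:=0.
\]
Then $2L_1\equiv B_2$, $2L_2\equiv B_1$ and $L_1+L_2=L=:L_3$, so \eqref{eq:redrel} is satisfied and this is the reduced building data of a standard bidouble cover $\pi\colon W\to Y$ with Galois group $G\cong\Z_2^2$. Because $H^0(\OO_Y)=\C$, a standard cover is determined by its building data; reading off the $G$-eigensheaf decomposition of $\pi_*\OO_W$, the intermediate double cover $W/\langle g_3\rangle\to Y$ has building data $(L_3,B_1+B_2)=(L,B)$ and is therefore isomorphic to $f\colon X\to Y$. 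So there is a degree-$2$ quotient $h\colon W\to X=W/\langle g_3\rangle$, and it remains only to check that $h$ is branched exactly over $p_1,\dots,p_k$.

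This is a local question at the $q_i$. Since $B_1,B_2$ are smooth and transverse at $q_i$, pick local coordinates $x,y$ on $Y$ with $B_1=\{x=0\}$ and $B_2=\{y=0\}$; then $B$ has local equation $xy$, so $X=\{z^2=xy\}$ has an $A_1$ point $p_i$ over $q_i$, which is moreover the only point over $q_i$ since $q_i\in B$ — in particular $p_1,\dots,p_k$ are distinct nodes of $X$. In these coordinates the standard bidouble cover $W$ is smooth, with coordinates $z_1,z_2$ satisfying $z_1^2=y$ and $z_2^2=x$, so $\pi$ is $(z_1,z_2)\mapsto(z_2^2,z_1^2)$. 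The one step that genuinely needs checking rather than just being written down is that $g_3$, the generator of $\ker\chi_3$, acts by $-1$ on both $L_1^{-1}$ and $L_2^{-1}$ (this follows from the relation $\chi_1\chi_2\chi_3=1$ among the three nontrivial characters of $\Z_2^2$), and this is really the crux of the whole argument: it forces $g_3$ to act locally on $W$ by $(z_1,z_2)\mapsto(-z_1,-z_2)$, so that its unique fixed point over $q_i$ is the origin and $h$ is, near $p_i$, the standard double cover $\mathbb A^2\to\mathbb A^2/\{\pm 1\}$ of a node.

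Finally, $h$ is étale over $X\setminus\{p_1,\dots,p_k\}$: the divisorial part of $\Fix(g_3)$ maps onto $B_3=\emptyset$, so $h$ is étale in codimension $1$, and evaluating the same local model with $x\neq 0$ (resp. $y\neq 0$, resp. $x,y\neq 0$) shows directly that $g_3$ acts freely on $W$ away from the $q_i$. Hence $h\colon W\to X$ is a double cover branched precisely on $p_1,\dots,p_k$, i.e. these form an even set of nodes of $X$. (As a consistency check, $B_1\cdot B_2$ is divisible by $4$ since $B_1$ and $B_2$ are both $2$-divisible, as it must be for an even set of nodes.)
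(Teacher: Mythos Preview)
Your proof is correct and follows essentially the same approach as the paper: construct the bidouble cover $W\to Y$ with $B_3=0$ using the evenness of one of the $B_i$ and the given $L$, identify $W/\langle g_3\rangle$ with $X$, and observe that $W\to X$ is branched exactly at the $p_i$. You supply a more detailed local verification than the paper, but the argument is the same.
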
 
\begin{proof}  The fact that $p_1,\dots p_k$ are nodes of $X$ can be checked easily by a local computation.
Let $L_3\in \Pic(X)$ be such that $f_*\OO_X=\OO_Y\oplus L_3\inv$, so that $f$ is given by the relation $2L_3\equiv B$. Choose  $L_1\in \Pic(X)$ with $2L_1\equiv B_2$ and set $L_2:=L_3-L_1$.  As explained in \S \ref{ssec:bidouble}, the relations $2L_1\equiv B_2$ and $2L_2\equiv B_1$ determine a standard  bidouble cover $h\colon Z\to Y$ (we take $B_3=0$). For $i=1,2$ denote  by $g_i\in G\cong \Z_2^2$ the element that fixes $h\inv B_i$ pointwise and set $g_3=g_1+g_2$. Then $Z/g_3$ is isomorphic to $X$ and the quotient map $Z\to Z/g_3$ is a double cover branched precisely on $p_1,\dots ,p_k$.
\end{proof}

\section{Godeaux surfaces with an Enriques involution}
In this section we study the following situation:
\begin{itemize}
\item $S$ is a numerical Godeaux surface, i.e.,  a smooth minimal surface of general type with $K^2_S=1$ and $p_g(S)=q(S)=0$
\item $\sigma\in \Aut(S)$ is an involution such that $\Sigma:=S/\sigma$ is birational to an Enriques surface.
\end{itemize}
We call the involution $\sigma$ an {\em Enriques involution}. 
Godeaux surfaces with an involution have been studied in \cite{keum-lee00} and    in  \cite{CaCiML07}; in particular, in \cite{CaCiML07} it is proven that a Godeaux surface $S$   with an Enriques involution has $\tors(S)\cong \Z_4$.  In addition, the possible automorphism groups of numerical Godeaux surfaces with torsion of order $\ge 3$ have been listed in \cite{maggiolo}, but without analyzing  the quotient surfaces. 

We recall  the following example  \cite[Ex.~4.3]{keum-lee00}: 

\begin{ex}\label{ex:keum-lee}
Let $S$ be a Godeaux surface with $\tors(S)\cong \Z_4$ and let $\wt S\to S$ be the universal  cover, i.e. the degree 4 cyclic cover given by $\tors(S)$. By \cite{reid-godeaux}, the  minimal model $\wt S\can$ of $\wt S$ is canonically embedded  in $\pp(1,1,1,2,2)$, with coordinates $x_1,x_2,x_3, y_1,y_3$,  as the zero locus of  two homogeneous equations $q_0$ and $q_2$ of degree 4. 

The equation $q_0$ involves the monomials:
\[ x_1^4, x_2^4, x_3^4, x_1^2x_3^2, x_1x_3x_2^2, x_1x_2y_1, x_2x_3y_3, y_1y_3,\]
 and $q_2$ involves the monomials:
\[x_1^2x_2^2, x_2^2x_3^2, x_1^3x_3, x_1x_3^3, x_1x_2y_3, x_2x_3y_1, y_1^2, y_3^2.\]
 We denote by $G\cong \Z_4$  the Galois group of $\wt S\to S$: the group $G$ acts freely also on $\wt S\can$ and the quotient surface is the canonical model $S\can$ of $S$. The action of $G$ extends to the ambient  $\pp(1,1,1,2,2)$ and there is a generator $g\in G$ that acts   by $(x_1,x_2, x_3,y_1,y_3)\mapsto (ix_1,-x_2,-ix_3,iy_1,-iy_3)$. 
 
 Now we define an involution $\wt\sigma$ of $\pp(1,1,1,2,2)$ by $(x_1,x_2, x_3,y_1,y_3)\mapsto (-x_1,x_2,-x_3,y_1,y_3)$; the involution $\wt \sigma$ commutes with $g$.   We assume from now on  that the polynomial $q_0$ does not involve $x_1x_2y_1$, $x_2x_3y_3$ and the polynomial $q_2$ does not involve $x_2x_3y_1$, $x_1x_2y_3$, so that $q_0$ and $q_2$ are invariant under 
 $\wt \sigma $. Hence $\wt \sigma$  acts on   $\wt S\can$ and descends to an  involution $\sigma$ of $S\can$ and of its minimal resolution $S$.
  
 The divisorial part $R$ of the fixed locus $ \sigma$ on $S\can$ is the paracanonical curve defined by $x_2=0$, hence  it is a connected curve of genus 2; if $S\can$ is smooth then $R$ is also smooth, and by Cor.~4.8 and Prop.~7.10 of \cite{CaCiML07} it follows that $\sigma$ is an Enriques involution. Since the quotient of a  smooth surface by an involution has canonical singularities, it follows that for every smooth $S\can$ as above the involution $\sigma$ of $S$ is an Enriques involution.
Using Bertini's theorem, it is not difficult to see that if  $q_0$ anq $q_2$ are general  the surface $S\can=S$ is smooth. 
 \end{ex}
 In this section we characterize the quotient surface $S/\sigma$ and, exploiting this characterization,  we prove the following classification results:
 \begin{thm}\label{thm:main-godeaux}
 Let $S$ be a Godeaux surface and let $\sigma\in \Aut(S)$ be an Enriques involution.
 
 Then $S$ is as in Example \ref{ex:keum-lee}.
  \end{thm}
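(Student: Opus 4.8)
The plan is to start with a Godeaux surface $S$ equipped with an Enriques involution $\sigma$ and reconstruct the data of Example \ref{ex:keum-lee} intrinsically. First I would recall from \cite{CaCiML07} that $\tors(S)\cong\Z_4$, so the universal cover $\tpi\colon\wt S\to S$ is the degree-$4$ cyclic étale cover associated with a generator of $\tors(S)$, and by \cite{reid-godeaux} the canonical model $\wt S\can$ is a complete intersection of two quartics $q_0,q_2$ in $\pp(1,1,1,2,2)$ with coordinates $x_1,x_2,x_3,y_1,y_3$, on which a generator $g$ of $G\cong\Z_4$ acts by the weights $(i,-1,-i,i,-i)$ displayed in the example. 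The monomials occurring in $q_0$ and $q_2$ are then forced by $G$-invariance (the two quartics lie in the two relevant eigenspaces), which gives exactly the two lists written in Example \ref{ex:keum-lee} — the list for $q_0$ in the $g$-invariant eigenspace (weight $1$) and the list for $q_2$ in the eigenspace of weight $-1$ (up to rescaling coordinates one may assume $q_0,q_2$ normalized this way).

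Next I would lift $\sigma$ to the universal cover. Since $\sigma$ normalizes $\tors(S)$ (it acts on $\Pic(S)$), it lifts to an automorphism $\wt\sigma$ of $\wt S$, and since $\sigma^2=1$ the lift may be chosen with $\wt\sigma^2\in G$; a counting argument using that $\sigma$ has no fixed-point contribution to $\tors$, or direct analysis of the fixed-divisor $R$ (the genus-$2$ paracanonical curve of $S\can$), shows one can arrange $\wt\sigma$ to be an involution commuting with $g$, so that $\langle g,\wt\sigma\rangle\cong\Z_4\times\Z_2$. The key input here is the description of the fixed locus of an Enriques involution from \cite{CaCiML07} (Cor.~4.8, Prop.~7.10): $R$ is a smooth connected curve of genus $2$, and $\Sigma=S/\sigma$ is birational to an Enriques surface. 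I would then transport $\wt\sigma$ to the canonical ring: $\wt\sigma$ acts on $H^0(\wt S\can,\cK)$ and more generally on the graded pieces, and because it commutes with $g$ it acts diagonally in coordinates compatible with the $g$-eigenspaces. Diagonalizing, $\wt\sigma$ acts on $(x_1,x_2,x_3,y_1,y_3)$ by a tuple of signs; the constraint that the fixed divisor on $S\can$ be the paracanonical curve of genus $2$ (i.e.\ that $\Fix(\wt\sigma)$ restricted to $\wt S\can$ have the right dimension and type) pins the sign pattern down to $(-1,+1,-1,+1,+1)$, up to the choice of which $x_i$ plays the role of $x_2$ — exactly the $\wt\sigma$ of Example \ref{ex:keum-lee}.

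Finally, $\wt\sigma$-invariance of $q_0$ and $q_2$ forces the vanishing of the coefficients of precisely the monomials that are anti-invariant under this sign pattern: in $q_0$ these are $x_1x_2y_1$ and $x_2x_3y_3$, and in $q_2$ they are $x_2x_3y_1$ and $x_1x_2y_3$, which is exactly the normalization imposed in the example. This shows $S\can$, together with $\sigma$, arises by the construction of Example \ref{ex:keum-lee}, and passing to minimal resolutions gives the statement for $S$. I would also note that the converse direction (that every such datum yields an Enriques involution) is already contained in Example \ref{ex:keum-lee}.

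The main obstacle I anticipate is the rigidity step in the middle: showing that $\wt\sigma$ can be chosen as a genuine \emph{involution} (not merely an order-$4$ lift) \emph{and} that it acts diagonally with the asserted sign pattern. This requires a careful intertwining of (i) the group-theoretic extension $1\to G\to\wt G\to\langle\sigma\rangle\to1$, (ii) the representation of $\wt G$ on the low-degree graded pieces of the canonical ring, and (iii) the geometric constraint on $\Fix(\sigma)$ coming from the Enriques-quotient hypothesis via \cite{CaCiML07}. In particular one must rule out the alternative sign patterns and the non-split extension; I expect this to be where most of the genuine work (as opposed to bookkeeping) lies, and it is presumably why the authors first "characterize the quotient surface $S/\sigma$" — the linear-systems analysis on the Enriques quotient is what supplies the missing rigidity.
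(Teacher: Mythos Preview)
Your overall architecture matches the paper's: lift $\sigma$ to an involution $\wt\sigma$ of $\wt S\can\subset\pp(1,1,1,2,2)$ commuting with $G$, determine its action on the coordinates, and read off that $q_0,q_2$ omit the four anti-invariant monomials. You have also correctly located the crux and correctly guessed that the Enriques-side analysis supplies the missing rigidity. What you have not done is supply that rigidity, and the mechanism the paper uses is different from the direct fixed-locus analysis you suggest.

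The paper proceeds via a computation of the \emph{$\sigma$-type} (the pair of eigenspace dimensions) of $H^0(mK_S+\eta)$ for $m=2,4$ and $\eta\in\tors(S)$ (Lemma~\ref{lem:types}), and this rests on the Enriques geometry as follows. First (Proposition~\ref{prop:parity}) one shows that on the quotient Enriques surface $Y$ the branch curve lies in $|2E+C_5+K_Y|$ and that $K_Y+C_1+\dots+C_4$ is even. This yields an \emph{explicit} $\sigma$-invariant generator $p^*N-(\Gamma_1+\dots+\Gamma_4)$ of $\tors(S)$, where $2N\equiv K_Y+C_1+\dots+C_4$ (Lemma~\ref{lem:torsion}); this is what forces $\sigma$ to act trivially on $\tors(S)$ and hence $\wt\sigma$ to commute with $g$. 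Your ``counting argument / direct analysis of $R$'' does not obviously do this: without the explicit torsion element there is no evident way to exclude the dihedral extension in which $\sigma$ inverts $\tors(S)$ and $\wt\sigma$ swaps the $i$- and $(-i)$-eigenspaces of $g$. (Ruling out $\Z_8$ is the easy part --- a generator would act freely, contradicting $K^2_{\wt S}=4$; this is Lemma~\ref{lem:lift}.) Second, with the torsion element in hand, each $mK_X+\eta$ can be written as a pull-back from $Y$ plus exceptional corrections, and the projection formula together with Kawamata--Viehweg vanishing gives the eigenspace dimensions. The type $\{2,0\}$ of $2K_S$ forces $x_1,x_3$ to share a sign; type $\{1,1\}$ of $2K_S+1$ and $2K_S+3$ forces $x_2$ to have the opposite sign and pins down $y_1,y_3$; and the type $\{5,2\}$ of $4K_S$ shows that the six invariant degree-$4$ monomials already satisfy a linear relation, which must therefore be $q_0$. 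This last point is how the paper \emph{proves} that $q_0$ is $\wt\sigma$-invariant rather than anti-invariant --- something you assume in your final step without justification.
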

  The surfaces in Example \ref{ex:keum-lee}  correspond to case $R_1$ of Table 2 of \cite{maggiolo}, hence they form an  irreducible unirational subset  of dimension 6 of the moduli space of Godeaux surfaces with torsion of order 4. Hence Theorem \ref{thm:main-godeaux} yields immediately:
  \begin{cor}\label{cor:Godeaux-moduli}
 The Godeaux surfaces with an Enriques involution give an   irreducible  unirational    subset $\mathcal{GE}$ of dimension 6 of the moduli space of Godeaux surfaces with torsion of order 4. 
 \end{cor}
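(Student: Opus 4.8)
The plan is to read off the corollary from Theorem~\ref{thm:main-godeaux} together with a parameter count for the construction of Example~\ref{ex:keum-lee}. Let $\mathcal P\cong\C^{12}$ be the space of pairs $(q_0,q_2)$, where $q_0$ is a linear combination of the six monomials $x_1^4,x_2^4,x_3^4,x_1^2x_3^2,x_1x_2^2x_3,y_1y_3$ and $q_2$ a linear combination of the six monomials $x_1^2x_2^2,x_2^2x_3^2,x_1^3x_3,x_1x_3^3,y_1^2,y_3^2$ --- i.e. the monomials allowed in Example~\ref{ex:keum-lee} after dropping in each equation the two that are not $\wt\sigma$-invariant. Over the dense open $\mathcal P^0\subseteq\mathcal P$ where $\wt S\can=\{q_0=q_2=0\}\subset\pp(1,1,1,2,2)$ is smooth, the free quotient $S(q_0,q_2):=\wt S\can/G$ (with $G\cong\Z_4$ as in Example~\ref{ex:keum-lee}) is a smooth Godeaux surface with $\tors\cong\Z_4$ carrying the Enriques involution induced by $\wt\sigma$. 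Sending $(q_0,q_2)$ to the class of $S(q_0,q_2)$ defines a rational map from $\mathcal P$ to the moduli space of Godeaux surfaces with torsion of order $4$, defined on $\mathcal P^0$ and with image $\mathcal{GE}$ by Theorem~\ref{thm:main-godeaux}. Since $\mathcal P$ is irreducible and rational, $\mathcal{GE}$ is irreducible and unirational.

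It remains to compute $\dim\mathcal{GE}$, i.e. the dimension of the general fibre of $\mathcal P^0\to\mathcal{GE}$. As the degree $4$ cover $\wt S\to S$ and the $G$-action are intrinsic to $S$, an isomorphism $S(q_0,q_2)\cong S(q_0',q_2')$ lifts to a $G$-equivariant isomorphism of canonical models, hence --- both being complete intersections of quartics in the same $\pp(1,1,1,2,2)$ on which $G$ acts as in Example~\ref{ex:keum-lee}, by \cite{reid-godeaux} --- to a graded automorphism $\phi$ of $\pp(1,1,1,2,2)$ normalizing $G$ with $\phi^*(q_0',q_2')=(q_0,q_2)$ as ideals. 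Since $q_0$ has trivial $G$-character and $q_2$ the order-$2$ character, and both characters are fixed by every automorphism of $\Z_4$, one gets $\phi^*q_0'=\mu q_0$ and $\phi^*q_2'=\nu q_2$ for some scalars $\mu,\nu$. Now $g$ has the distinct eigenvalues $i,-1,-i$ on $x_1,x_2,x_3$ and $i,-i$ on $y_1,y_3$, so the linear part of such a $\phi$ is diagonal; the only $y$-translations normalizing $G$ are $y_1\mapsto y_1+cx_2x_3$ and $y_3\mapsto y_3+c'x_1x_2$, and applying either to a general pair $(q_0,q_2)$ introduces monomials outside the twelve allowed, so they cannot occur. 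Hence the general fibre of $\mathcal P^0\to\mathcal{GE}$ is an orbit of the group generated by the torus $(\C^*)^5$ of diagonal coordinate changes $x_i\mapsto a_ix_i$, $y_j\mapsto b_jy_j$ and by the two independent rescalings of $q_0$, $q_2$; these overlap in the one-dimensional subgroup $a_i=\lambda$, $b_j=\lambda^2$ (trivial on $\pp(1,1,1,2,2)$, scaling both equations by $\lambda^4$), so the fibre has dimension $6$ and $\dim\mathcal{GE}=12-6=6$.

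Alternatively, and more economically, the family of Example~\ref{ex:keum-lee} is precisely case $R_1$ of Table~2 of \cite{maggiolo}, where the irreducibility, unirationality and dimension $6$ of the corresponding subset of the moduli space are already recorded; combined with Theorem~\ref{thm:main-godeaux} this yields the statement at once. If one argues directly, I expect the only delicate point to be exactly the last step above: verifying that no coordinate change normalizing $G$ other than those in the torus (in particular no $y$-translation) identifies two general members of the family, so that the fibres of $\mathcal P^0\to\mathcal{GE}$ are indeed the six-dimensional orbits and the naive count is valid.
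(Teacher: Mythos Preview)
Your proposal is correct. The paper's own argument is precisely your ``alternative'' paragraph: it observes that the surfaces of Example~\ref{ex:keum-lee} are case $R_1$ of Table~2 of \cite{maggiolo}, where irreducibility, unirationality and dimension $6$ are already recorded, and then invokes Theorem~\ref{thm:main-godeaux}; no direct parameter count is carried out.

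Your first, longer argument is a self-contained substitute for the citation of \cite{maggiolo}: you parametrize the family by $\C^{12}$ and compute the $6$-dimensional fibre of the map to moduli by analyzing the graded automorphisms of $\pp(1,1,1,2,2)$ normalizing $G$. This is sound; the only small omission is that the normalizer also contains the involution swapping $x_1\leftrightarrow x_3$, $y_1\leftrightarrow y_3$ (conjugating $g$ to $g^3$), but this is a discrete symmetry and does not affect the dimension count. The benefit of your explicit approach is that it makes the corollary independent of \cite{maggiolo}; the paper's approach is shorter but relies on that reference.
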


 A possible strategy for proving  Theorem \ref{thm:main-godeaux} would be  to use the description  given in \cite{maggiolo} of the Godeaux surfaces with torsion of order 4 that admit an involution and decide which involutions are Enriques by looking at the fixed locus, as we have done  in Example \ref{ex:keum-lee}.   However we prefer to use a more conceptual approach, based on a detailed study  of linear systems  on  the quotient Enriques surfaces, that gives also a description of the family of such  Enriques surfaces (cf. \S \ref{sec:enriques}).
   \bigskip

The rest of the section is devoted to proving  Theorem \ref{thm:main-godeaux}; we start by fixing some notation.

We denote by $\pi\colon S\to \Sigma$ the quotient map; by \cite[Prop.~4.5]{CaCiML07}, the bicanonical map of $S$ is composed with $\sigma$  and $\Fix(\sigma)$ consists of a smooth curve $R$ and of  5 isolated fixed points $p_1, \dots p_5$. We set $q_i=\pi(p_i)$, $i=1,\dots ,5$ and  $B:=\pi(R)$.  There is a commutative diagram
\begin{equation}
\begin{CD}
V@>{\epsilon}>>S\\
@V{\wt \pi} VV @VV\pi V\\
W@> \eta >>\Sigma
\end{CD}
\end{equation}
where $\epsi$ is  the blow up of $S$ at $p_1,\dots p_5$, $\eta$ is the minimal resolution of $\Sigma$  and $\wt \pi$ is a flat double cover.  
For $i=1,\dots,5$ we denote  by $C_i$ the exceptional curve over $q_i$; the $C_i$ are nodal curves, that is, they are smooth rational and $C_i^2=-2$. By \cite[Prop.~3.9]{CaCiML07} and Lemma 4.11, ibidem, there exists a birational morphism $f\colon W\to Y$ such that:
\begin{itemize}
\item $Y$ is a smooth Enriques surface
\item the exceptional locus  of $f$ is disjoint from the $C_i$
\item  there is a flat double cover $p\colon X\to Y$ fitting in the commutative diagram:
\begin{equation}\label{diag:grande}
\begin{CD}
X@<g<<V@>{\epsilon}>>S\\
@VpVV @V{\wt \pi} VV @VV\pi V\\
Y@<f<<W@> \eta >>\Sigma
\end{CD}
\end{equation}
where $X$ has canonical singularities and $g$ is the minimal resolution.
\end{itemize}
Also, we abuse notation and we denote by the same letter a  curve in $V$, resp. $W$,  and its  image in  $X$, resp. $Y$. This should not be confusing for the reader, since we will mostly work with the cover $p\colon X\to Y$ and forget about $\wt \pi\colon V\to W$.
The branch curve $B\subset Y$ has at most negligible singularities and it is disjoint from $C_1,\dots C_5$;  the flat cover $p$ is   given by the linear equivalence $2L\equiv B+C_1+\dots+C_5$.
 For $i=1,\dots 5$, the surface $X$ is smooth above the curve $C_i$ and $p^*C_i=2\Gamma_i$, with $\Gamma_i$ a $-1$-curve. By contracting  $\Gamma_1, \dots \Gamma_5\subset X$, one obtains an intermediate   object between  the minimal surface $S$ and its canonical  model $S\can$; in particular   $p^*B$ is   the pull back of   $2K_{S\can}$, hence $B$   is nef and  $B^2=2$.  Since  $h^i(B)=h^i(K_Y+(K_Y+B))$,  by Kawamata-Viehweg vanishing we have $h^i(B)=0$ for $i>0$, so $h^0(B)=2$.
 We have $L^2=-2$, hence $\chi(L)=0$. Since $h^i(L)=0$ for $i>0$ by Kawamata-Viehweg vanishing, we have $h^0(L)=0$ as well. 

Recall (cf. \cite{cossec-dolgachev}) that an {\em elliptic half-pencil} of an Enriques surface $Y$ is an effective divisor $E$ such that $|2E|$ is a free pencil of elliptic curves of $Y$. One has: 

\begin{prop}\label{prop:parity} In the above setting, up to reordering  $C_1,\dots , C_5$,  we have:
\begin{enumerate}
\item there exists an elliptic half-pencil $E$ of $Y$  such that $B\in |2E+C_5+K_Y|$;
\item the divisor $K_Y+C_1+\dots +C_4$ is  divisible by 2 in $\Pic(Y)$.  
\end{enumerate}
\end{prop}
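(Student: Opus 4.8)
The plan is to extract as much numerical information as possible about the branch divisor $B$ and the curves $C_1,\dots,C_5$ on the Enriques surface $Y$, and then to organize the divisibility statements using the structure of $\NS(Y)$ together with the tools of \S 2. First I would record the basic invariants already assembled in the text: $B^2=2$, $B$ is nef, $h^0(B)=2$, the $C_i$ are disjoint nodal curves disjoint from $B$, and $2L\equiv B+C_1+\dots+C_5$. The key first step is to analyze the genus-$1$ pencil-like behaviour of $|B|$: since $B^2=2$, $B$ nef, and $h^0(B)=2$, the movable part of $|B|$ together with the canonical class forces $B$ to split, after suitable manipulation, as $2E + (\text{something of self-intersection} \le 0)$ where $2E$ is an elliptic pencil. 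Concretely, $B-K_Y$ has $h^0 \ge 1$ (by Riemann--Roch on the Enriques surface, since $\chi(B-K_Y)=1+\tfrac12(B-K_Y)(B-2K_Y)=1+\tfrac12 B^2 = 2$, and $h^2(B-K_Y)=h^0(-B)=0$), so one can write $B \equiv K_Y + D$ with $D$ effective and $D^2 = B^2 = 2$; analyzing the possible decompositions of $D$ (using that $D\cdot C_i$ must be compatible with $p^*$ being a flat double cover and with the fixed-point count $5$) should pin down $D \equiv 2E + C_5$ for an elliptic half-pencil $E$, after reordering. This gives part (i).

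For part (ii), I would feed part (i) back into the relation $2L\equiv B+C_1+\dots+C_5$. Substituting $B\equiv 2E+C_5+K_Y$ yields $2L \equiv 2E + K_Y + C_1+\dots+C_5 + C_5 = 2E + 2C_5 + K_Y + C_1+C_2+C_3+C_4$, hence $K_Y + C_1+C_2+C_3+C_4 \equiv 2(L-E-C_5)$, which is exactly the assertion that $K_Y+C_1+\dots+C_4$ is divisible by $2$ in $\Pic(Y)$. So once (i) is in hand, (ii) is a one-line computation — the real content is entirely in establishing (i), and in particular in justifying that the four-element subset $\{C_1,C_2,C_3,C_4\}$ that appears is the "right" one, i.e.\ that $C_5$ is precisely the nodal curve that gets absorbed into the branch divisor in the form $2E+C_5+K_Y$.

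The main obstacle I anticipate is controlling the intersection numbers $B\cdot C_i$ and ruling out competing decompositions of $D = B - K_Y$. A priori $D$ could be written as a sum involving several of the $C_i$, or as $2E$ plus a non-nodal component, or the elliptic pencil could fail to be primitive. To handle this I would use: (a) the fact that $p\colon X\to Y$ is a flat double cover with $X$ having only canonical singularities, which constrains the singularities of $B$ (at most negligible) and forces $B$ to be $2$-divisible modulo the $C_i$ in a controlled way via the relation $2L\equiv B+\sum C_i$ — in particular $B+\sum C_i$ is even, so $B \equiv \sum C_i \pmod 2$ in $\NS(Y)$, and since $\NS(Y)_{\mathrm{free}} \cong \mathbb{Z}^{10}$ has a well-understood $\bmod\ 2$ reduction (a quadratic form), the parity of $B$ is determined by which of the $C_i$ it "uses"; (b) the classification of effective divisors of low degree on an Enriques surface (nodal curves, elliptic half-pencils, and sums thereof) as in \cite{cossec-dolgachev}. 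A secondary subtlety is the torsion: on an Enriques surface $\Pic(Y)[2] = \mathbb{Z}_2$ is generated by $K_Y$, so "divisible by $2$ in $\Pic(Y)$" versus "in $\NS(Y)$" differ by a possible $K_Y$ twist, and one must check the statement on the nose in $\Pic(Y)$ — this is presumably why the half-pencil in (i) carries the precise twist $2E+C_5+K_Y$ rather than $2E+C_5$, and getting this twist right is exactly what makes (ii) come out integrally divisible rather than only divisible up to $K_Y$. I would resolve this by tracking the $2$-torsion carefully through the relation $2L \equiv B + \sum C_i$ and using that $L^2 = -2$ (already noted) to constrain $L$ modulo the lattice generated by $E$, $C_5$, and the remaining $C_i$.
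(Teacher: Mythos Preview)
Your derivation of (ii) from (i) is exactly what the paper does: substitute $B\equiv 2E+C_5+K_Y$ into $2L\equiv B+C_1+\dots+C_5$ and read off $K_Y+C_1+\dots+C_4\equiv 2(L-E-C_5)$. No issue there.

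The gap is in (i). Your proposed route---write $B\equiv K_Y+D$ with $D\in|B+K_Y|$ effective and then ``analyze the possible decompositions of $D$''---does not get off the ground. Since $K_Y$ is numerically trivial, $D$ is numerically equivalent to $B$, and in fact $|B+K_Y|$ is again an irreducible pencil (same Riemann--Roch/vanishing argument), so a general $D$ is an irreducible curve of arithmetic genus $2$, not something that visibly decomposes. Moreover your suggested discriminator, the intersection numbers $D\cdot C_i$, is useless here: $B\cdot C_i=0$ for \emph{all} $i$ since $B$ is disjoint from the $C_i$, so nothing singles out $C_5$ at this stage.

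What the paper does instead is to pass to $|M|:=|2B|$, show it is base-point free (via the restriction sequence to a general $D\in|B|$ and $H^1(B)=0$), and then invoke the classification of superelliptic systems on an Enriques surface \cite[Thm.~4.7.1]{cossec-dolgachev}: one has $M\equiv 2B'$ with either $B'=E_1+E_2$ ($E_1E_2=1$, both half-pencils) or $B'=2E+Z$ ($Z$ nodal, $EZ=1$). The parity relation $2L\equiv B+\sum C_i$ is then used twice, in sharper ways than you indicate: first, case $E_1+E_2$ is excluded because it would force $E_i\cdot(2L)=E_i\cdot B=1$, odd; second, in case $2E+Z$, if $Z\notin\{C_1,\dots,C_5\}$ then $C_1+\dots+C_5+Z\sim 2(L-E)$ would have self-intersection $-12$, giving $(L-E)^2=-3$, contradicting the evenness of the Enriques lattice. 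This is what pins down $Z=C_5$ after reordering. Finally, the $K_Y$-twist is settled not by lattice bookkeeping but by a geometric observation: $|2E+C_5|$ has $C_5$ as a fixed component, whereas the general member of $|B|$ is irreducible (this uses \cite[Prop.~5.1]{CaCiML07}), so $B\equiv 2E+C_5+K_Y$ rather than $2E+C_5$. Your outline is missing all three of these specific moves.
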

\begin{proof} (i) Let $D\in |B|$ be general.
By  \cite[Prop.~5.1]{CaCiML07}, $D$  is irreducible; since $D^2=2$, by Bertini's theorem  it follows that  $D$ is smooth. 
Consider the system $|M|=|2B|$: the (set-theoretic) base locus of $|M|$ is contained in the (set-theoretic) base locus of $|B|$, which consists of $1$ or $2$ points.  The restriction  sequence
$0\to H^0(B)\to H^0(M)\to H^0(2K_D)\to 0$ is exact, since $H^1(B)=0$; it follows that $|M|$ is free and, in the terminology of \cite{cossec-dolgachev}, it is a {\em superelliptic} system. By \cite[Thm.~4.7.1]{cossec-dolgachev}, $M=2B'$, where   there are two possibilities for $B'$:
\begin{enumerate}[(a)]
\item there exists  elliptic half-pencils $E_1$, $E_2$ such that $E_1E_2=1$ and $B'=E_1+E_2$
\item there exists an elliptic half-pencil $E$ and a nodal curve $Z$ such that $EZ=1$ and $B'=2E+Z$
\end{enumerate}
Since $2B=2B'=M$, we either have $B=B'$ or $B=B'+K_Y$, and in either case  $B$ and $B'$ are numerically equivalent. 
If   case (a) occurs,  then $(E_1+E_2)B=2$ and $(E_1+E_2)C_i=0$ for $i=1,\dots, 5$. Since $|2E_i|$ is a free pencil  for $i=1,2$ and $B^2>0$,  it follows that $E_iB=1$ and $E_iC_j=0$ for $j=1,\dots, 5$. So we have $E_i(2L)=E_i(B+C_1+\dots+C_5)=1$, a contradiction.
So case (b) occurs. We claim that $Z$ is one of the $C_i$. Assume by contradiction that this is not the case: then $(2E+Z)C_i=0$ implies that $Z$ is disjoint from the $C_i$. The divisor $C_1+\dots +C_5+Z\sim 2L-2E$ has self-intersection $-12$, hence $(L-E)^2=-3$, contradicting the fact that the intersection form on  $\NS(Y)$ is even.

So $Z$ is equal to, say, $C_5$, and we have $B=2E+C_5+K_Y$, since $|2E+C_5|$ has $C_5$ as a fixed component while $|B|$ is an irreducible system. 
\medskip

(ii) follows immediately by (i).
\end{proof}

\begin{lem}\label{lem:torsion}
 Let $S$ be a Godeaux surface with an involution $\sigma$ of Enriques type. 
Then $\tors(S)$ is cyclic of order 4 and $\sigma$ acts as the identity on $\tors(S)$.
\end{lem}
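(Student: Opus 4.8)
The first assertion, that $\tors(S)$ is cyclic of order $4$, is exactly the statement of \cite{CaCiML07} quoted above, so the substance is the claim that $\sigma$ acts as the identity on $\tors(S)\cong\Z_4$. Since $\Aut(\Z_4)\cong\Z_2$, my plan is simply to produce one $\sigma$-invariant \emph{generator} of $\tors(S)$; I will build it out of pull-backs from $Y$ together with the curves $\Gamma_1,\dots,\Gamma_5$, using the $2$-divisibility furnished by Proposition \ref{prop:parity}(ii). It is convenient to work on $p\colon X\to Y$ and to read $\sigma$ as the deck transformation of $p$. First I would record that $g^*\colon \tors(X)\to\tors(V)$ and $\epsilon^*\colon\tors(S)\to\tors(V)$ are $\sigma$-equivariant isomorphisms: a torsion class is numerically trivial, hence meets every $g$-exceptional curve in degree $0$ and so descends along $g$, while $\epsilon$ is a blow-up. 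Thus $\tors(X)\cong\tors(S)\cong\Z_4$ equivariantly, and it is enough to find a $\sigma$-invariant generator of $\tors(X)$.

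Next I would exhibit the candidate. By Proposition \ref{prop:parity}(ii) choose $N\in\Pic(Y)$ with $K_Y+C_1+C_2+C_3+C_4\equiv 2N$, and set
\[ M:=p^*N-\Gamma_1-\Gamma_2-\Gamma_3-\Gamma_4\in\Pic(X). \]
Because $p^*C_i\equiv 2\Gamma_i$ for $i=1,\dots,5$, we obtain $2M\equiv p^*(2N-C_1-C_2-C_3-C_4)\equiv p^*K_Y$, and since $2K_Y\equiv 0$ on the Enriques surface $Y$ this gives $4M\equiv 0$; so $M\in\tors(X)$, of order dividing $4$. Also $M$ is $\sigma$-invariant: $p^*N$ is pulled back from $Y$, and each $\Gamma_i$ is the reduced preimage of $C_i$ under $p$, hence a $\sigma$-stable irreducible curve, so $\sigma^*\Gamma_i\equiv\Gamma_i$. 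Therefore everything reduces to showing that $M$ has order exactly $4$, equivalently that $p^*K_Y\not\equiv 0$.

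This last divisibility step is where I expect the real work to lie, since a priori the torsion class $K_Y$ could be killed by pull-back to the cover. I would dispose of it cohomologically: if $p^*K_Y\equiv 0$, then the canonical bundle formula for the flat double cover $p$ (with $Y$ smooth and $X$ Gorenstein) gives $K_X\equiv p^*(K_Y+L)\equiv p^*L$; on the other hand $p_*\OO_X=\OO_Y\oplus L^{-1}$ yields $h^0\!\big(X,\OO_X(p^*L)\big)=h^0(Y,L)+h^0(Y,\OO_Y)=0+1=1$, using $h^0(Y,L)=0$ as established before the proposition, whereas $h^0\!\big(X,\OO_X(K_X)\big)=p_g(X)=p_g(S)=0$ ($p_g$ being a birational invariant). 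This contradiction forces $p^*K_Y\not\equiv 0$, hence $M$ has order $4$ and generates $\tors(X)$; as $\sigma^*M\equiv M$, the involution $\sigma$ acts trivially on $\tors(X)\cong\tors(S)$, as claimed.
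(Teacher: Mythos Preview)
Your proof is correct and follows essentially the same approach as the paper: both construct the $\sigma$-invariant generator $M=p^*N-(\Gamma_1+\dots+\Gamma_4)$ of $\tors(X)$ using Proposition~\ref{prop:parity}(ii), and observe that $2M\equiv p^*K_Y$. The paper is terser, writing only ``Of course the element of order $2$ is $p^*K_Y$'' and ``we may replace $S$ by $X$''; you supply the justifications the paper omits, namely the equivariant identification $\tors(S)\cong\tors(V)\cong\tors(X)$ (including the descent along $g$ for numerically trivial classes) and the cohomological argument that $p^*K_Y\not\equiv 0$ via $h^0(p^*L)=1\neq 0=p_g(X)$.
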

\begin{proof} That $\tors(S)$ is cyclic of order 4 is proven in \cite[Prop.~5.3]{CaCiML07}. Here we describe explicitly $\tors(S)$. 
Since smooth blow ups do not change the torsion, we may replace $S$ by $X$. Of course the element of order 2 is $p^*K_Y$. By Proposition \ref{prop:parity} there is $N\in \Pic(Y)$ such that $2N\equiv C_1+\dots +C_4+K_Y$; pulling back to $X$  we obtain $2p^*N\equiv 2(\Gamma_1+\dots+\Gamma_4)+p^*K_Y$, hence $p^*N-(\Gamma_1+\dots +\Gamma_4)$ is a torsion element of order 4 and it is clearly $\sigma$-invariant. 
\end{proof} 

\begin{lem} \label{lem:lift} Let $S$ be a Godeaux surface with an involution $\sigma$ of Enriques type,  let $c\colon\wt S\to S$ be the  canonical cover and let $G=\Hom(\tors(S), \C^*)$ be the Galois group of $c$. Then there is an involution $\wt \sigma $ of $\wt S$ that lifts $\sigma$ and commutes with $G$.
\end{lem}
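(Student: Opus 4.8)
The plan is to use the general liftability criterion for automorphisms of double covers established in Section~\ref{ssec:lift}. Recall from Lemma~\ref{lem:torsion} that $\tors(S)$ is cyclic of order $4$, generated by an element $\tau$, and that $\sigma^*\tau=\tau$. The canonical cover $c\colon\wt S\to S$ is the \'etale $\Z_4$-cover associated with this cyclic torsion subgroup; equivalently $c$ is the double cover given by $2\eta\equiv 0$, where $\eta$ is the generator of $\tors(S)$, composed appropriately, or more directly $\wt S$ is a cyclic cover. So first I would reduce to the situation of Section~\ref{ssec:lift}: since $c$ is cyclic \'etale, by the discussion there it suffices to check that the line bundle defining $c$ is $\sigma$-invariant up to isomorphism, i.e. that the relevant torsion class is $\sigma$-fixed.

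Concretely, I would proceed as follows. Write $\wt S=\Spec\bigl(\bigoplus_{i=0}^{3}\OO_S(-i\eta)\bigr)$ for the cyclic cover defined by an isomorphism $\OO_S(4\eta)\xrightarrow{\sim}\OO_S$; since $\sigma^*\eta\equiv\eta$ in $\Pic(S)$ by Lemma~\ref{lem:torsion}, all the summands $\OO_S(-i\eta)$ are $\sigma$-invariant. Form the Cartesian square with $\sigma'\colon\wt S'\to\wt S$ the base change of $\sigma$ along $c$; then $\wt S'\to S$ is again a cyclic cover with the same building data (because $\sigma^*$ fixes the class $\eta$ and one may choose the structural isomorphism compatibly, since $H^0(\OO_S)=\C$), hence $\wt S'\cong\wt S$ as $G$-covers, and $\wt\sigma$ is obtained by composing this isomorphism with $\sigma'$. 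This is precisely case~(a)-type reasoning of Section~\ref{ssec:lift} applied to the cyclic (rather than bidouble) case; the resulting $\wt\sigma$ commutes with $G$ because it preserves the eigensheaf decomposition of $c_*\OO_{\wt S}$.

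It remains to arrange that $\wt\sigma$ can be chosen to be an \emph{involution} rather than merely an automorphism of order dividing $8$ lifting $\sigma$. A priori $\wt\sigma^2$ is an element of $G$, so $\wt\sigma$ has order $2$, $4$, or $8$. I would handle this by the standard trick of correcting $\wt\sigma$ by an element of $G$: if $\wt\sigma^2=\gamma\in G$, then replacing $\wt\sigma$ by $\wt\sigma\circ\delta$ for $\delta\in G$ changes $\wt\sigma^2$ to $\wt\sigma^2\cdot(\wt\sigma\delta\wt\sigma^{-1})\cdot\delta=\gamma\cdot\delta^2$ (using that $G$ is abelian and $\wt\sigma$-equivariant as it commutes with $G$, so conjugation by $\wt\sigma$ is trivial on $G$). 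Since $G\cong\Z_4$ has the property that $\{\delta^2:\delta\in G\}=2G=\{0,\tau^2\}$, this forces $\gamma\in\{0,\tau^2\}$: indeed $\wt\sigma^2$ must already lie in $2G$ because $\wt\sigma^2$ is $\sigma$-equivariant... and then I choose $\delta$ with $\delta^2=\gamma^{-1}$ to make the corrected lift an involution. The main obstacle is this last normalization step: one must verify that $\wt\sigma^2$ genuinely lands in the subgroup $2G$ of squares, which needs a small argument — for instance, that $\wt\sigma$ acts on $c_*\OO_{\wt S}=\bigoplus_j L^{-j}$ by scalars $\lambda_j$ on each $L^{-j}$ with $\lambda_0=1$, so $\wt\sigma^2$ acts by $\lambda_j^2$ and hence corresponds to the character $j\mapsto\lambda_j^2$, which is a square in $\widehat{G}=\Hom(\tors(S),\C^*)$; dualizing back gives $\wt\sigma^2\in 2G$. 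Once this is in place the classification of groups of order $8$ (as invoked in Section~\ref{ssec:lift}) is not even needed; one directly produces the involutive lift and concludes.
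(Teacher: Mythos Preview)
Your construction of a lift $\wt\sigma$ commuting with $G$ is fine and matches the paper's reasoning. The gap is in the second step, where you try to force $\wt\sigma^2\in 2G$ by a purely algebraic argument.

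The claim that ``$\wt\sigma$ acts on $c_*\OO_{\wt S}=\bigoplus_j L^{-j}$ by scalars $\lambda_j$'' is incorrect: $\wt\sigma$ lifts $\sigma$, not the identity, so the induced map on $c_*\OO_{\wt S}$ is $\sigma$-semilinear, i.e.\ it gives isomorphisms $L^{-j}\to\sigma_*L^{-j}$, not endomorphisms of $L^{-j}$. There is no scalar $\lambda_j$ to square. More to the point, the conclusion $\wt\sigma^2\in 2G$ simply fails in the abstract setup: take $E$ an elliptic curve, $\eta\in\Pic(E)$ of exact order $4$, and $\sigma$ translation by a $2$-torsion point $t$ not in the image of $\wt E[2]\to E[2]$ (this image has index $2$). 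Then $\sigma^*\eta\cong\eta$, yet every lift of $\sigma$ to the \'etale $\Z_4$-cover $\wt E\to E$ has order $8$. So no amount of rechoosing $\delta\in G$ will produce an involutive lift, and your correction trick cannot close the argument.

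The paper supplies the missing geometric input. One shows that the abelian group $\wt G=\langle G,\wt\sigma\rangle$ of order $8$ cannot be $\Z_8$: if it were, any generator $h$ (a lift of $\sigma$) would satisfy $h^2\in G\setminus\{0\}$, hence $h^2$ acts freely on $\wt S$, so $h$ acts freely; the same applies to every odd power of $h$, and the even powers lie in $G$, so all of $\wt G$ acts freely. But then $8\mid K^2_{\wt S}=4$, a contradiction. Thus $\wt G\cong\Z_4\times\Z_2$ and an involutive lift exists. Equivalently, the key fact you are missing is that $\sigma$ has fixed points on $S$ (five isolated points plus the curve $R$), which forces any lift to have fixed points on $\wt S$ and hence rules out order $8$.
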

\begin{proof} Since the canonical cover is intrinsically associated with $S$, $\sigma$ can be lifted   to an automorphism $h$ of $\wt S$, so the point is to show that $h$ can be taken to be an involution that commutes with $G$. We have 
$$\wt S=\Spec(\oplus_{\eta\in \tors(S)}\eta),$$
  and by Lemma \ref{lem:torsion}  the action of  $G$  on  $\oplus_{\eta\in \tors(S)}\eta$ preserves  the summands. Thus  $h$ commutes with $G$.  Denote by $\wt G$ the subgroup of $\Aut(\wt S)$ generated by $h$ and $G$: it is an abelian group of order 8 with a cyclic subgroup of order $4$, hence it is either isomorphic to $\Z_4\times \Z_2$ or to $\Z_8$. To prove the lemma we have to exclude the latter possibility. Assume for contradiction that $\wt G$ is cyclic of order 8: 
  then $h$ generates $\wt G$.  So  in particular $h$ acts freely on $\wt S$, because $G$  does so. It follows that the group $\wt G$ acts freely on $\wt S$, which is impossible, for instance because $K^2_{\wt S}=4$ is not divisible by 8. 
\end{proof}

By definition, the canonical ring $R(\wt S)$ coincides  with the paracanonical ring of $S$:
\[\oplus_{m\in \mathbb N, \ \eta\in \tors(S)} H^0(mK_S+\eta).\]

 There are two possible choices   of $\wt \sigma$ as in Lemma \ref{lem:lift};  each of these choices induces a $\sigma$-linearization of the pluricanonical bundles $mK_S+\eta$ compatible with the multiplicative structure of $R(\wt S)$ and a $\Z_2$-action  on $H^0(mK_S+\eta)$ that lifts $\sigma$. So each vector space $H^0(mK_S+\eta)$ splits as a sum of two eigenspaces (corresponding to $\pm 1$), whose dimensions we call the {\em $\sigma$-type} of $mK_S+\eta$. 

We determine the $\sigma$-type in some cases:

\begin{lem}\label{lem:types}
Let $S$ be a Godeaux surface and $\sigma \in \Aut(S)$ an Enriques involution. Denoting by  $1\in\tors(S)$ a generator, 
the $\sigma$-type of $mK_S+i$, for  $m=1,2,4$ and $i\in \tors(S)$ is  shown  in row $m$, column $i$ of Table \ref{table:types}.
\end{lem}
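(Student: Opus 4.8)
The plan is to reduce everything to spaces of sections on the Enriques surface $Y$ via the double cover $p\colon X\to Y$, which realizes $\sigma$ as the covering involution. First I would pass from $S$ to $X$: blowing up points and taking a partial resolution of the canonical model do not change spaces of sections of twisted pluricanonical sheaves, so $H^0(S,mK_S+i)\iso H^0(X,mK_X+i)$ compatibly with the action, where now $i$ ranges over $\tors(X)=\tors(S)$ and $\sigma$ acts as the covering involution $\sigma_X$ of $p$, the line bundle $\OO_X(i)$ being given the linearization induced by the chosen lift $\wt\sigma$ of Lemma~\ref{lem:lift} --- this is the only point at which the choice of $\wt\sigma$ intervenes. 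Then I would record the total dimensions: by Riemann--Roch, Kawamata--Viehweg vanishing and the vanishing of $h^1(K_S+\eta)$ for $\eta\ne 0$ (a standard feature of numerical Godeaux surfaces), $h^0(mK_S+i)$ equals $0$ or $1$ for $m=1$, equals $2$ for $m=2$, and equals $7$ for $m=4$, for all $i$. (These also follow by splitting, under $G\iso\Z_4$, the Hilbert function of the weighted complete intersection model of $\wt S\can$.)

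Next I would push forward along $p$. Since $p$ is the flat double cover given by $2L\equiv B+C_1+\dots+C_5$, we have $K_X\equiv p^*(K_Y+L)$ and $p_*\OO_X=\OO_Y\oplus L\inv$, so $H^0(X,mK_X)=H^0(Y,m(K_Y+L))\oplus H^0(Y,mK_Y+(m-1)L)$, the summand pulled back from $Y$ being the $\sigma$-invariant one (consistent with the fact, recalled above, that $H^0(2K_S)$ is entirely $\sigma$-invariant since the bicanonical map is composed with $\sigma$). For the torsion twists one uses Lemma~\ref{lem:torsion}: the order-$2$ element is $p^*K_Y$ and an order-$4$ one is $p^*N-(\Gamma_1+\dots+\Gamma_4)$ with $2N\equiv C_1+\dots+C_4+K_Y$, where $\Gamma_j$ is the $(-1)$-curve with $p^*C_j=2\Gamma_j$. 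Computing the eigensheaf decomposition $p_*\OO_X(-\Gamma_1-\dots-\Gamma_4)=\OO_Y(-C_1-\dots-C_4)\oplus L\inv$ (an invariant section vanishes along $\Gamma_j$ precisely when the corresponding function on $Y$ vanishes along $C_j$, while an anti-invariant section automatically vanishes along the reduced ramification), and similarly for the order-$4$ twist written with a plus sign, one expresses $H^0(mK_S+i)$ as $H^0(Y,A_i^+)\oplus H^0(Y,A_i^-)$ for explicit line bundles $A_i^\pm$ built out of $K_Y$, $L$, $N$ and the $C_j$, with $A_i^+$ and $A_i^-$ differing by $L$.

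It then remains to evaluate $h^0(Y,A_i^\pm)$. Here the inputs are: $2K_Y\equiv 0$, $h^0(K_Y)=0$; $h^0(L)=0$; $B$ nef with $B^2=2$ and $h^0(B)=2$; Proposition~\ref{prop:parity}, i.e. $B\equiv 2E+C_5+K_Y$ with $|2E|$ a free elliptic pencil; and that $C_1,\dots,C_5$ are pairwise disjoint $(-2)$-curves disjoint from $B$ (whence $EC_i=0$ for $i\le 4$ and $EC_5=1$). The recurring device is to peel off $(-2)$-curves: whenever $A\cdot C_j<0$ one has $h^0(A)=h^0(A-C_j)$, and after removing the right multiples of the $C_j$ one is left with a line bundle of the form (a nonnegative combination of $B$ and $E$) twisted by a torsion class, whose $h^0$ is read off from Riemann--Roch and Kawamata--Viehweg vanishing on $Y$. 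Reading off the two resulting dimensions --- and checking their sum against the totals above --- gives the $\sigma$-type, i.e. the entry in row $m$, column $i$ of the table.

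I expect the main difficulty to be bookkeeping rather than a single hard idea. The delicate point is that on the torsion-twisted pieces the identification ``$\sigma$-invariant $=$ pulled back from $Y$'' is off by a sign: the $\wt\sigma$-induced linearization of $\OO_X(i)$ differs from the obvious pullback linearization by a character on $\tors(X)$, so for $i\ne 0$ one must determine, using the compatibility of $\wt\sigma$ with the multiplicative structure of the canonical ring (together with the already settled untwisted case), which of $H^0(Y,A_i^+)$, $H^0(Y,A_i^-)$ is the invariant summand; and throughout one must distinguish a line bundle on $Y$ from its twist by $K_Y$, since these have different $h^0$ (for instance $h^0(\OO_Y)=1$ but $h^0(K_Y)=0$). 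Finally, the two lifts of $\sigma$ permitted by Lemma~\ref{lem:lift} differ by the order-$2$ element of $G$, which acts trivially on the $i=0,2$ summands but by $-1$ on those with $i$ of order $4$, hence interchanges the two eigenspaces in columns $i=1$ and $i=3$; the table is to be understood for one fixed choice of $\wt\sigma$.
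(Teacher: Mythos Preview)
Your proposal is correct and follows essentially the same approach as the paper: pass from $S$ to $X$, use the flat double cover $p\colon X\to Y$ together with the explicit description of $\tors(X)$ from Lemma~\ref{lem:torsion} to write each $mK_X+i$ in terms of pullbacks and the $\Gamma_j$, then apply the projection formula and Kawamata--Viehweg vanishing on $Y$ to compute the two summands. Your discussion of the linearization/sign issue and of the two possible lifts $\wt\sigma$ is more explicit than the paper's, but note that since the $\sigma$-type in Table~\ref{table:types} is recorded as an unordered pair $\{a,b\}$, this ambiguity is harmless for the present lemma (it only becomes relevant in the subsequent proof of Theorem~\ref{thm:main-godeaux}).
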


\begin{table}[h]\caption{$\sigma$-types of  $mK_S+i$}\label{table:types}
	\begin{tabular}{|c|c|c|c|c|}
\hline
m $\backslash$ i&0&1&2&3\\
\hline
1& \{0,0\}& \{1,0\}& \{1,0\}& \{1,0\}\\
\hline
2&\{2,0\}& \{1,1\}& \{2,0\}& \{1,1\}\\
\hline
4&\{5,2\}&\{4,3\}&\{5,2\}&\{4,3\}\\
\hline

\end{tabular}
\end{table}

\begin{proof}
We may replace $S$ by $X$, since this does not affect the $\sigma$-type. We recall the Hurwitz formula $K_X=p^*(K_Y+L)$, where as usual  $L$ is the line bundle such that $p_*\OO_X=\OO_Y\oplus L\inv$; in addition, by Lemma \ref{lem:torsion} and its proof, there is a line bundle $N\in \Pic(Y)$ such that  $2N\equiv K_Y+C_1+\dots +C_4$ and our  chosen generator $1\in \tors(X)\cong\tors(S)$ is equal to $p^*N-(\Gamma_1+\dots +\Gamma_4)$.
So we have :
\begin{align}\label{eq:mK}
mK_X& \equiv  p^*(mK_Y+mL)\\
mK_X+1& \equiv p^*(mK_Y+mL+N)-(\Gamma_1+\dots +\Gamma_4)\nonumber \\
mK_X+2& \equiv p^*((m+1)K_Y+mL)\nonumber \\
mK_X+3 &\equiv p^*((m+1)K_Y+mL+N)-(\Gamma_1+\dots +\Gamma_4).\nonumber 
\end{align}
Recall also that $h^0(K_X+i)=1$ for $i\ne 0$ and $h^0(mK_X+i)=1+\frac {m(m-1)}{2}$ for $m\ge 2$ and for every $i\in \tors(X)$. Using these remarks, the  projection formulae for double covers and Kawamata-Viehweg vanishing,  it is not hard  to obtain Table \ref{table:types}.

As an example, consider $2K_X+1$:  using \eqref{eq:mK} and the relation $2L\equiv B+C_1+\dots +C_5$,  gives $2K_X+1=p^*(B+N)+\Gamma_1+\dots +\Gamma_4+2\Gamma_5$. Since  for $m>0$ and for every $i$ the fixed part of $|mK_X+i|$ contains $m(\Gamma_1+\dots +\Gamma_5)$, we have  $2=h^0(2K_X+1)=h^0(p^*(B+N))$. The projection formula for double covers gives the following  decomposition in $\Z_2$-eigenspaces:
\[H^0(p^*(B+N))=H^0(B+N)\oplus H^0(B+N-L).\]
We have $B+N\sim B+\frac 1 2(C_1+\dots+C_4)$: since $B$ is nef and big, we may apply Kawamata-Viehweg vanishing and we obtain $h^0(B+N)=\chi(B+N)=1$, and thus $2K_X+1$ has $\sigma$-type $\{1,1\}$.

\end{proof}

\begin{proof}[Conclusion of the proof of Theorem \ref{thm:main-godeaux}]
We follow the steps of Reid's description of the paracanonical ring $R(S)$ taking into account also the action of  the cyclic group $ G$ (of order 4). So in degree 1 we have  generators $x_i\in H^0(K_S+i)$, $i=1,2,3$ and in degree 2 we have two more generators $y_j\in H^0(2K_S+j)$, for $j=1,3$ and the element $g\in G$  acts on these generators as  in Example \ref{ex:keum-lee}. In addition, we may assume that all these generators are eigenvectors of $\wt \sigma$, since $\wt \sigma $ and $g$ commute. Finally, up to replacing $\wt \sigma$ by $\wt\sigma g^2$,  we may assume that $y_1$ is $\wt \sigma$ invariant. 
The space $H^0(2K_S)$ is generated by $x_2^2$ and $x_1x_3$: since by Lemma \ref{lem:types} the $\sigma$-type of $2K_S$ is $\{2,0\}$, it follows that $x_1$ and $x_3$ are eigenvectors of $\wt \sigma$ for the same eigenvalue. The space $H^0(2K_S+1)$ is generated by $x_2x_3$ and $y_1$ and has type $\{1,1\}$. It follows that $x_2$ and $x_3$ have opposite eigenvalues. Similarly, looking at $H^0(2K_S+3)$ we conclude that $y_3$ is also $\wt \sigma$-invariant. 
So $\wt \sigma$ has the form $(x_1, x_2, x_3,y_1,y_3)\mapsto (\pm x_1,\mp x_2, \pm x_3, y_1, y_3)$. 

Now look at $H^0(4K_S)$: the two eigenspaces are spanned by
\begin{equation}\label{eq:q0}
x_1^4, x_2^4, x_3^4, x_1^2x_3^2, x_1x_3x_2^2,  y_1y_3
\end{equation}
and by 
\[x_1x_2y_1, x_2x_3y_3.\]
Since by Lemma \ref{lem:types} the $\sigma$-type of $4K_S$ is $\{5,2\}$, there is a linear relation $q_0$ involving the monomials \eqref{eq:q0}.
The same argument shows the existence of a relation $q_2$ between the monomials:
\[x_1^2x_2^2, x_2^2x_3^2, x_1^3x_3, x_1x_3^3,  y_1^2, y_3^2.\]

Finally, we observe that  the map $(x_1,x_2, x_3, y_1,y_3)\mapsto (-x_1,-x_2,-x_3,y_1,y_3)$ induces the identity on $\pp(1,1,1,2,2)$,  so $\wt \sigma$ acts  on $\pp(1,1,1,2,2)$ as in Example \ref{ex:keum-lee}.

\end{proof}

\section{Enriques surfaces of Godeaux-quotient type}\label{sec:enriques}

Here we apply the results of the previous section to describe the Enriques surfaces that  are (birational)  quotients of a Godeaux surface by an involution. 

We consider Enriques surfaces $Y$  such that $Y$ contains an elliptic half-pencil $E$ and nodal curves $C_1,\dots C_5$ such that:
\begin{itemize}
\item $EC_5=1$, $EC_1=\dots =EC_4=0$
\item $C_1+\dots+C_4+K_Y$ is divisible by 2 in $\Pic(Y)$. 
\end{itemize}

We call a surface $Y$ as above an Enriques surface of  {\em Godeaux-quotient type}. 
Proposition \ref{prop:parity} has a converse:
\begin{prop}\label{prop:converse}
In the above setting:
\begin{enumerate}
\item the system $|2E+C_5+K_Y|$ is an irreducible pencil;
\item let $B\in |2E+C_5+K_Y|$ be a curve disjoint from $C_1,\dots C_5$; then there exists a double cover $X\to Y$ branched on $B+C_1+\dots +C_5$
and the minimal model of $X$ is a Godeaux surface with an involution of Enriques type. 
\end{enumerate}
\end{prop}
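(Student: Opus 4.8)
The plan is to establish the two parts in turn; for (ii) I will assume (as in the situation of Proposition~\ref{prop:parity}) that the nodal curves $C_1,\dots,C_5$ are pairwise disjoint.

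\emph{Part (i).} Put $D:=2E+C_5+K_Y$, so that $D^2=2$, $DK_Y=0$ and $\chi(D)=2$ by Riemann--Roch. Since $D-K_Y=2E+C_5$ is nef (as $|2E|$ is free and $(2E+C_5)C_5=0$) and big ($(2E+C_5)^2=2$), Kawamata--Viehweg vanishing gives $h^i(D)=0$ for $i>0$, hence $h^0(D)=2$: $|D|$ is a pencil. Next I would check that $|D|$ has no fixed component. The only nef curve that could occur is $C_5$, because $DC_5=0$; but from the exact sequence $0\to\OO_Y(2E+K_Y)\to\OO_Y(D)\to\OO_{C_5}\to0$, together with $h^0(2E+K_Y)=1$ and $h^1(2E+K_Y)=0$ (which follow from $0\to\OO_Y(-2E)\to\OO_Y\to\OO_{2E}\to0$, using that $2E$ is a multiple fibre of $|2E|$), the restriction $H^0(D)\to H^0(\OO_{C_5})$ is surjective, so $C_5$ is not fixed. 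Finally, $B\cdot E=1$ for $B\in|D|$, so in any effective splitting $B=B'+B''$ one summand, say $B'$, satisfies $B'E=0$ and is therefore vertical for the elliptic pencil $|2E|$; a vertical summand of a member of a pencil without fixed component must be a multiple of the fibre class, while $B-2E\equiv C_5+K_Y$ is not effective because $\deg\bigl((C_5+K_Y)|_{C_5}\bigr)=-2$. Hence the general $B\in|D|$ is irreducible, and reduced since $4\nmid D^2$. (Alternatively, once $C_5$ is excluded as a fixed component, irreducibility follows from the classification of superelliptic systems on Enriques surfaces already invoked in the proof of Proposition~\ref{prop:parity}.)

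\emph{Part (ii): construction.} Let $N\in\Pic(Y)$ with $2N\equiv C_1+\dots+C_4+K_Y$. Then
\[
B+C_1+\dots+C_5\;\equiv\;(2E+C_5+K_Y)+(C_1+\dots+C_4)+C_5\;=\;2(E+C_5+N),
\]
so, writing $L:=E+C_5+N$, the relation $2L\equiv B+C_1+\dots+C_5$ defines a standard flat double cover $p\colon X\to Y$. The branch divisor has at most negligible singularities (it is smooth for general $B$) and $X$ has at most rational double points; let $\nu\colon\wt X\to X$ be the minimal resolution. Over each $C_i$ the cover is branched, so $p^*C_i=2\Gamma_i$ with $\Gamma_i\iso\pp^1$ and, from $(p^*C_i)^2=2C_i^2=-4$, with $\Gamma_i^2=-1$; the $\Gamma_i$ are disjoint $(-1)$-curves, and I let $\mu\colon\wt X\to S$ contract them.

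\emph{Part (ii): invariants and conclusion.} From $p_*\OO_X=\OO_Y\oplus L\inv$ and $L^2=-2$ (computed using $E^2=0$, $C_5^2=N^2=-2$, $EC_5=1$, $EN=C_5N=0$) one gets $\chi(\OO_S)=\chi(\OO_X)=\chi(\OO_Y)+\chi(-L)=1$. By Hurwitz $K_X=p^*(K_Y+L)$, and since $2p^*L\equiv p^*B+\sum p^*C_i=2\tB+2\sum\Gamma_i$ (where $p^*B=2\tB$) one finds $\mu^*K_S\sim\nu^*(p^*K_Y+\tB)$, whence $K_S^2=(p^*K_Y+\tB)^2=\tB^2=1$. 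Moreover $p^*K_Y+\tB$ is nef: for every irreducible $\Theta\subset X$ one has $(p^*K_Y+\tB)\cdot\Theta=(K_Y+\tfrac12B)\cdot p_*\Theta\ge0$, because $K_Y+\tfrac12B\sim E+\tfrac12C_5$ is a nef $\Q$-divisor. Thus $K_S$ is nef and big, so $S$ is a minimal surface of general type with $K_S^2=\chi(\OO_S)=1$, hence $p_g(S)=q(S)=0$ (e.g.\ $q\ge1$ would force $K_S^2\ge2\chi(\OO_S)$), i.e.\ $S$ is a Godeaux surface. Finally the deck involution of $p$ is $\nu$-equivariant and fixes each $\Gamma_i$ pointwise, so it descends through $\nu$ and $\mu$ to an involution $\sigma$ of $S$; since $S/\sigma$ is birational to $Y$, $\sigma$ is an Enriques involution.

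\emph{Main obstacle.} The one genuinely delicate step is the irreducibility of $|2E+C_5+K_Y|$ in (i): it depends on the special geometry of the half-pencil $E$ (in particular on $2E$ being a multiple fibre), and the most economical argument is the one already used for Proposition~\ref{prop:parity}, namely the Cossec--Dolgachev description of superelliptic linear systems on Enriques surfaces. Part (ii) is then routine cover bookkeeping, the only point requiring a little care being that a negligibly-singular $B$ produces rational double points on $X$, so that $S$ is the minimal resolution of its canonical model rather than a smooth double cover of $Y$; none of the numerical conclusions are affected.
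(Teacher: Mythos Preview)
Your approach is essentially the paper's: for (i) the paper simply invokes Riemann--Roch together with \cite[Prop.~3.1.5]{cossec-dolgachev}, and for (ii) it notes that $B+C_1+\dots+C_5$ is even and refers to standard double-cover computations (citing \cite[Prop.~2.2]{marg-rita_K23}); you have unpacked both steps in full detail, which is fine.

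One small wrinkle in your direct argument for (i): the sentence ``The only nef curve that could occur is $C_5$, because $DC_5=0$'' is garbled (you presumably mean something like ``the only irreducible curve with $D\cdot C\le 0$ that could be a fixed component''), and as written you have only excluded $C_5$, not arbitrary fixed components. Your subsequent irreducibility argument via the elliptic fibration is also a bit loose: a vertical $B'$ need not be a multiple of the full fibre class---it could a priori be $E$, $E'$, or a component of a reducible fibre---so a couple more lines are needed to dispose of these. None of this is fatal, since (as you note) the Cossec--Dolgachev classification of linear systems on Enriques surfaces settles the matter directly; that is precisely the route the paper takes. Part (ii) is carried out correctly and in more detail than the paper gives.
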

\begin{proof}

The first assertion follows from the Riemann-Roch theorem and  \cite[Proposition 3.1.5]{cossec-dolgachev}.  For (ii)  notice that $B+C_1+\dots +C_5$  is even.  Let $X\to Y$ be a double cover branched on $B+C_1+\dots +C_5$. Standard double cover calculations (cf. for example \cite[Proposition 2.2]{marg-rita_K23})  yield the result.

\end{proof}

As a direct consequence of Propositions \ref{prop:converse}, \ref{prop:parity} and Corollary \ref{cor:Godeaux-moduli},  we have:
\begin{cor}\label{cor:irreducible}
The Enriques surfaces of Godeaux-quotient type are an irreducible unirational   subset of dimension 5 of the moduli space of Enriques surfaces. 
\end{cor}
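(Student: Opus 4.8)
The plan is to prove that the Enriques surfaces of Godeaux-quotient type form an irreducible, unirational subset of dimension 5 in the moduli space of Enriques surfaces, by combining the two preceding results. First I would recall that by Proposition \ref{prop:parity}, every Enriques surface obtained as a (birational) quotient of a Godeaux surface with an Enriques involution carries an elliptic half-pencil $E$ and nodal curves $C_1,\dots,C_5$ with $EC_5=1$, $EC_1=\dots=EC_4=0$, and $C_1+\dots+C_4+K_Y$ divisible by $2$; conversely, by Proposition \ref{prop:converse}, every Enriques surface equipped with such data is of Godeaux-quotient type. Thus the locus in question is exactly the image of the parameter space of pairs $(Y;E,C_1,\dots,C_5)$ under the forgetful map to the moduli space of Enriques surfaces.

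For irreducibility and unirationality I would exhibit the parameter space explicitly. The natural approach is to go through the cover construction: by Corollary \ref{cor:Godeaux-moduli} the Godeaux surfaces with an Enriques involution are parametrized by an irreducible unirational $6$-dimensional family (Reid's weighted-complete-intersection description of Example \ref{ex:keum-lee}, with $q_0,q_2$ ranging over a rational variety cut out by the prescribed monomial restrictions, modulo the finite group of projective automorphisms preserving this structure). The quotient construction $S\rightsquigarrow \Sigma = S/\sigma$ and the resolution-and-contraction diagram \eqref{diag:grande} define a morphism from this $6$-dimensional rational parameter space to the moduli space of Enriques surfaces; its image is precisely the locus of Enriques surfaces of Godeaux-quotient type. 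The image of an irreducible variety is irreducible, and the image of a unirational variety is unirational, so the locus is irreducible and unirational. It remains only to compute its dimension.

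For the dimension count I would argue that the fibres of the map $\{\text{Godeaux with Enriques involution}\}\to\{\text{Enriques of Godeaux-quotient type}\}$ are $1$-dimensional. Indeed, fixing the Enriques surface $Y$ together with the half-pencil $E$ and nodal curves $C_1,\dots,C_5$, the Godeaux surface is recovered (up to the finitely many choices coming from torsion and from the $2$-divisibility) as the double cover branched on $B+C_1+\dots+C_5$, where $B$ ranges over the pencil $|2E+C_5+K_Y|$ of Proposition \ref{prop:converse}(i) — more precisely over the open subset of curves $B$ disjoint from the $C_i$. This pencil is $1$-dimensional, so a general fibre has dimension $1$; since the data $(E,C_1,\dots,C_5)$ on a given $Y$ of Godeaux-quotient type is rigid (nodal curves and half-pencils are discrete in $\NS(Y)$, which is rigid for an Enriques surface), the forgetful map $\{(Y;E,C_1,\dots,C_5)\}\to\{Y\}$ is generically finite onto its image. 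Therefore $\dim\mathcal{GE} - 1 = 5$ equals the dimension of the locus of Enriques surfaces of Godeaux-quotient type.

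The main obstacle I expect is making the dimension bookkeeping airtight: one must check that the generic fibre of the quotient map really is the full pencil $|2E+C_5+K_Y|$ (equivalently, that distinct general $B$ in the pencil yield non-isomorphic Godeaux surfaces, modulo the finite ambiguity), and that no extra moduli of Enriques surfaces are absorbed — i.e. that the $C_i$ and $E$ genuinely impose conditions so that a general Enriques surface is \emph{not} of Godeaux-quotient type. Both points are handled by the explicit construction of \S\ref{sec:enriques}, which produces a $5$-parameter family dominating the locus, so that the two inequalities $\dim \le 5$ (from $6-1$) and $\dim \ge 5$ (from the explicit construction) match; this is exactly why the corollary is stated as a consequence of Proposition \ref{prop:converse}, Proposition \ref{prop:parity} and Corollary \ref{cor:Godeaux-moduli} together.
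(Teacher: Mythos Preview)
Your argument is essentially the paper's: the corollary is deduced directly from Propositions \ref{prop:parity}, \ref{prop:converse} and Corollary \ref{cor:Godeaux-moduli}, via the quotient map from the $6$-dimensional family $\mathcal{GE}$ to the Enriques locus, with $1$-dimensional fibres coming from the pencil $|2E+C_5+K_Y|$.

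One caution about your final paragraph: you propose to secure the lower bound $\dim\ge 5$ by invoking the explicit $5$-parameter construction of \S\ref{sec:enriques}. In the paper's logical order this would be circular: Example \ref{ex:main-enriques} and Theorem \ref{thm:main-enriques} come \emph{after} Corollary \ref{cor:irreducible}, and the proof of Theorem \ref{thm:main-enriques} \emph{uses} Corollary \ref{cor:irreducible} to conclude that the explicit family is dominant (the paper never checks independently that the $5$-parameter family has generically finite moduli map). So the dimension must be settled entirely by the fibre argument. Fortunately that is enough: the fibre over $Y$ is at most $1$-dimensional as you say, and it is generically at least $1$-dimensional because a Godeaux surface, being of general type, has finite automorphism group, so only finitely many $B$ in the pencil can yield isomorphic covers. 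Hence $\dim=6-1=5$ without appeal to \S\ref{sec:enriques}.
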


We  now give an explicit construction of Enriques surfaces of Godeaux quotient type. 

\begin{ex}\label{ex:main-enriques}
Consider the quadric cone $\mathcal Q\subset \pp^3$ defined by $y_0^2-y_1y_2=0$ and the involution $\tau$ of $\mathcal Q$ defined by $[y_0,y_1,y_2,y_3]\mapsto [y_0,-y_1,-y_2, y_3]$. The linear system $|M|$   spanned by the  invariant quadrics $y_0^2, y_1^2, y_2^2, y_3^2, y_0y_3$ embeds the quotient surface $\mathcal Q/\tau$ in $\pp^4$ as a quartic surface $\mathcal D$ defined by $x_0^2-x_1x_2=x_0x_3-x_4^2=0$.  The surface $\mathcal D$ ($\mathcal D_1'$
 in the notation of \cite[Ch.0,~\S 4]{cossec-dolgachev})  has two singular points of type $A_1$ at the points $P_1=[0,1,0,0,0]$ and $P_2=[0,0,1,0,0]$ (the ``simple vertices'') and a singularity of type $A_3$ at the point $P_0=[0,0,0,1,0]$ (the ``$A_3$-vertex''). 

An Enriques surfaces is called {\em special} if it contains a nodal curve $C$ and an elliptic half-pencil $E$ with $EC=1$. All the special Enriques surfaces can be constructed as follows (cf. \cite{horikawa_periods_I} and \cite{horikawa_periods_II}).

Take an element $B_0$ in the  linear system  of  $\tau$-invariant  quartic sections of $\mathcal Q$ such that $B_0$   does not contain the fixed points of $\tau$ and has at most negligible singularities. The double cover $\wt Y\to \mathcal Q$ is a $K3$ surface  with canonical singularities. In particular it has two $A_1$ points over the vertex $[0,0,0,1]\in \mathcal Q$. The involution $\tau$ can be lifted to a free involution $\wt \tau$ of $\wt Y$. The quotient surface $\wt Y/\wt \tau$ is an Enriques surface with canonical singularities, and by construction it is a double cover of $\mathcal D$ branched over the singular points $P_0,P_1,P_2$ and on the image $B$ of $B_0$.  The preimage of $P_0$ is an $A_1$ singular point, which gives a nodal curve $C$ on the minimal resolution $Y$ of $\wt Y/\wt \tau$; the preimage of the line joining $P_0$ and $P_1$ gives an elliptic half-pencil $E$ of $Y$ such that $EC=1$.

We now specialize this construction in order to get an Enriques surface of Godeaux quotient type. We take $B_0=D+\tau^*D$, where $D$ is a general quadric section of $\mathcal Q$. The curve $B_0$ has 8 nodes at the intersection points of $D$ and $\tau^*D$,  so in this case $\wt Y$ has 10 $A_1$ points, two occurring over the vertex of $\mathcal Q$ and eight  occurring over the nodes of $B$. These last eight points are an even set by Lemma \ref{lem:even}. 
As in the proof of Lemma \ref{lem:even} consider the bidouble cover  $h\colon Z \to \mathcal Q$ given by the relations $2L_1\equiv D$, $2L_2\equiv \tau^*D$, where $L_1=L_2=\OO_{\mathcal Q}(1)$. As in  \S\ref{ssec:bidouble} we denote by $G=\{1,g_1,g_2,g_3\}$ the Galois group of the bidouble cover and we assume that $g_1$, respectively $g_2$,  fixes the preimage of $D$, respectively $\tau^*D$, pointwise,  so  that $Z/g_3=\wt Y$. As explained in \S \ref{ssec:lift}, it is possible to lift $\tau$ to an automorphism $\rho$ of $Z$ and the group $\wt G<\Aut(Z)$ generated by the Galois group $G\cong \Z_2^2$ and by  $\rho$  is isomorphic to the dihedral group  $D_4$. The subgroup $G<\wt G$ contains two reflections  conjugate to one another  and the square of a rotation, so we may choose the lift $\rho$ of $\tau$ to be a rotation. Since $\tau$ switches $D$ and $\tau^*D$, the action of $\rho$ on $G$ by conjugation switches $g_1$ and $g_2$ and fixes $g_3$. It follows that $g_1$ and $g_2$ are reflections   and $g_3=\rho^2$. 
Now let $\wt \tau$ be the automorphism of $\wt Y=Z/\rho^2$ induced by $\rho$.  The fixed locus of $\rho^2$ on $Z$ is the set of 8 points lying over the nodes of $D+\tau^*D$.  Since $\rho$ acts freely on these points, it  follows that $\rho$ acts freely on $Z$ and $\wt \tau$ acts freely on $\wt Y$ (the fixed points of $\wt \tau$ correspond to solutions $z\in Z$ of $\rho z=z$ or $\rho z=\rho^2z$). Let $Y$ be the minimal resolution of  the surface $\wt Y/\wt \tau=Z/\rho$.   The surface $Y$ is a special Enriques surface that contains, besides  $C_5:=C$  as in the general case, four additional disjoint nodal curves $C_1,\dots, C_4$ arising from the 4 nodes of $\wt Y/\wt \tau$ that are the images of  the 8 nodes of $\wt Y$. Since the nodes of $\wt Y$ are an even set, by Lemma \ref{lem:div} either $C_1+\dots +C_4$ or $C_1+\dots +C_4+K_Y$ is even. Lemma \ref{lem:div-geo} tells us that the latter case occurs, and therefore $Y$ is an Enriques surface of Godeaux-quotient type. 

\end{ex} 

\begin{thm}\label{thm:main-enriques}
The general Enriques surface of Godeaux-quotient type  can be constructed as in Example \ref{ex:main-enriques}.  
\end{thm}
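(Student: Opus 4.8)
The plan is to prove that the construction of Example~\ref{ex:main-enriques} is \emph{dominant}, i.e.\ that it depends on exactly $5$ effective parameters, and then invoke the irreducibility of the locus of Enriques surfaces of Godeaux-quotient type established in Corollary~\ref{cor:irreducible}. Since an irreducible variety cannot contain a proper closed subset of the same dimension, a $5$-parameter subfamily of a $5$-dimensional irreducible family is automatically the whole family (generically), and the theorem follows. So the entire proof reduces to a parameter count for the construction of Example~\ref{ex:main-enriques}.

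For the parameter count, I would argue as follows. The input datum is a general quadric section $D$ of the quadric cone $\mathcal{Q}\subset\pp^3$, i.e.\ an element of $|\OO_{\mathcal Q}(2)|$; the resulting Enriques surface $Y$ is built canonically from $D$ (and from the choices of lifts, which are discrete and do not contribute moduli, by the analysis in \S\ref{ssec:lift}). One has $h^0(\mathcal Q,\OO_{\mathcal Q}(2))=9$, so quadric sections move in a $\pp^8$. From this I must subtract the dimension of the group of automorphisms of the pair $(\mathcal Q,\tau)$ acting on these sections, since two quadric sections in the same orbit of $\Aut(\mathcal Q,\tau)$ give isomorphic Enriques surfaces. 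The group $\Aut(\mathcal Q)$ of the quadric cone is well known: it is the image of $GL(2)$ acting on the base $\pp^1$ of the cone together with a $1$-dimensional piece of homotheties (equivalently, $\Aut(\mathcal Q)$ has dimension $4$), and the centralizer of the involution $\tau$ inside it has dimension $3$ (the automorphisms of $\pp^1$ fixing the two points $y_1,y_2$ form a $1$-dimensional torus, plus the scaling of $y_0$ and $y_3$). Thus $8-3=5$, matching the expected dimension.

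Alternatively, and perhaps more transparently, I would count directly on the Enriques side: the construction produces, together with $Y$, the elliptic half-pencil $E$ and the nodal curves $C_1,\dots,C_5$, and one checks these are determined by $Y$ up to finitely many choices, so that the parameter count for the construction equals the parameter count for the quadruple $(Y,E,C_1,\dots,C_5)$, which by Proposition~\ref{prop:converse} and Proposition~\ref{prop:parity} is exactly the locus of Enriques surfaces of Godeaux-quotient type. Then, whichever way one counts, one gets $5$, and since by Corollary~\ref{cor:irreducible} the locus of Enriques surfaces of Godeaux-quotient type is irreducible of dimension $5$, the family constructed in Example~\ref{ex:main-enriques}, being an irreducible $5$-dimensional subfamily of it, coincides with it generically. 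Hence the general Enriques surface of Godeaux-quotient type arises from Example~\ref{ex:main-enriques}.

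\textbf{The main obstacle.} The delicate point is the automorphism count: one must be sure that \emph{all} identifications among the surfaces $Y$ produced by the construction come from $\Aut(\mathcal Q,\tau)$ acting on the choice of $D$, and that no further coincidences arise (for instance from the two-to-one ambiguity $B_0=D+\tau^*D=\tau^*D+D$, which is harmless, or from the discrete choices of lifts $\rho$ and $\wt\tau$, which the $D_4$-analysis of Example~\ref{ex:main-enriques} shows are essentially unique up to relabelling). One must also verify genericity: that for general $D$ the curve $B_0=D+\tau^*D$ does avoid the fixed points of $\tau$, has only the $8$ expected nodes as singularities, and that the eight nodes of $\wt Y$ really do descend to four honest nodes on $\wt Y/\wt\tau$ giving disjoint nodal curves $C_1,\dots,C_4$ on $Y$ — all of which was already arranged in Example~\ref{ex:main-enriques}, so here it only needs to be invoked. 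Granting these points, the dimension count closes and the theorem is immediate from Corollary~\ref{cor:irreducible}.
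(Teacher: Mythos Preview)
Your approach is exactly the paper's: count parameters for the construction ($\dim|\OO_{\mathcal Q}(2)|=8$ minus the $3$-dimensional group of automorphisms of $(\mathcal Q,\tau)$) to obtain a $5$-dimensional family, then invoke Corollary~\ref{cor:irreducible}. One small slip: $\Aut(\mathcal Q)$ has dimension $7$, not $4$ (in the $\pp(1,1,2)$ model you are omitting the shears $w\mapsto w+q(u,v)$), but this is harmless since the number you actually use, $\dim\Aut(\mathcal Q,\tau)=3$, is correct.
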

\begin{proof} Since $\Aut(\mathcal Q)$ has dimension 3, the construction gives a $5$-dimensional family of Enriques surfaces of Godeaux-quotient type and the statement follows by Corollary \ref{cor:irreducible}.
\end{proof}

\section{A construction of the general Godeaux surface with an Enriques  involution}\label{ssec:bidouble-godeaux}

We give an alternative description of the general Godeaux surface with an involution of Enriques type, that will be used in \S \ref{sec:deg}  to compute some stable degenerations. 
\smallskip

We keep the notation of the previous section (especially of Example \ref{ex:main-enriques}).  We take $B_1$ a general quadratic section of $\mathcal Q$, $B_2=\tau^*B_1$ and $B_3$ a  general  hyperplane section containing the two smooth fixed points $Q_1$ and $Q_2$  of $\tau$ (notice that $B_3$ is $\tau$-invariant).  
Consider the minimal resolution $\mathbb F_2\to \mathcal Q$, denote by $\Gamma$ the exceptional curve  and use the same letter to denote curves on $\mathcal Q$ and their pull-backs to $\mathbb F_2$. 
By \S \ref{ssec:bidouble} there exists a bidouble cover  $T_0\to \mathbb F_2$  with branch divisors $B_1,B_2,B_3+\Gamma$ and by \S\ref{ssec:lift} the involution of $\mathbb F_2$ induced by $\tau$ can be lifted to an automorphism of $T_0$.
 The  preimage of  $\Gamma$ is the disjoint union of two irreducible $-1$-curves. Contracting  these two curves, one obtains a bidouble cover $q\colon T\to \mathcal Q$, with $T$ smooth, with branch divisors $B_1$, $B_2$ and $B_3$, which is branched also on the vertex  $Q_0=[0,0,0,1]$ of $\mathcal Q$. 
 By the Hurwitz formula, one has $K_T\sim \frac 12 B_3$, hence $T$ is smooth minimal  of general type with $K^2_T=2$.
 The group  $\wt G<\Aut(T)$ generated by the Galois group $G=\{1, g_1,g_2, g_3\}\cong \Z_2^2$ of $q$ and by a lift of $\tau$ is isomorphic to $D_4$ (cf. \S \ref{ssec:lift}). Denote by $\rho\in D_4$ an element of order 4: then $\rho$ is a lift of $\tau$,   $\rho^2$ is an element of $G$ and commutes  with $\rho$. Since $\tau$ exchanges $B_1$ and $B_2$,  we have  $g_3=\rho^2$ and  $g_1$ and $g_2=g_1\rho^2$  are reflections. 
As in Example \ref{ex:main-enriques}, the surface $\wt Y:=T/\rho^2$ is a $K3$ surface with 10 nodes.

\begin{lem}\label{lem:bidouble-godeaux}
In the above setting:
\begin{enumerate}
\item $g_1\rho$ and $g_1\rho^3$  induce a fixed point free involution of $\wt Y$;
\item the surfaces $T/g_1\rho$ and $T/g_1\rho^3$ are Godeaux surfaces with an Enriques involution. 
\end{enumerate}
\end{lem}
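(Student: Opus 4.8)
The setup is a $D_4$-cover $q\colon T\to \mathcal Q$ with $\wt G=\langle g_1,g_2,\rho\rangle$, $g_3=\rho^2$, and $T$ smooth minimal of general type with $K^2_T=2$ and $K_T\sim\tfrac12 B_3$. The plan is to analyze the index-2 subgroups of $\wt G$ generated by the ``reflection times rotation'' elements $g_1\rho$ and $g_1\rho^3$, show they act freely on $T$, and identify the quotients with the Godeaux surfaces of Example~\ref{ex:keum-lee} via the criteria of Lemma~\ref{lem:div-geo} and Proposition~\ref{prop:converse}.

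\smallskip

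\textbf{Step 1: freeness.} The element $\rho$ is a lift of $\tau$, and $\tau$ has isolated fixed points only at $Q_0,Q_1,Q_2$ on $\mathcal Q$; moreover $Q_0$ lies on $B_3$ and hence (by the discussion in Example~\ref{ex:main-enriques}) on the branch divisor of $q$, so its preimage in $T$ is a single point, while $Q_1,Q_2$ lie on $B_3$ as well, so similarly have single preimages. Since $\rho$ has order $4$, a fixed point $t$ of $g_1\rho$ would satisfy $g_1\rho\, t=t$, so $\tau(q(t))=q(t)$, forcing $q(t)\in\{Q_0,Q_1,Q_2\}$; but over each such point the fiber is a single point fixed by all of $\wt G$, and at such a point one checks (as in Example~\ref{ex:main-enriques}: ``the fixed points of $\wt\tau$ correspond to solutions $z$ of $\rho z=z$ or $\rho z=\rho^2 z$'') that $g_1\rho$ and $g_1\rho^3$ act without fixed points because the local action near the preimage point is the $D_4$-action on the tangent space in which $g_1\rho$ is a rotation of order $4$ conjugate to $\rho$ — and $\rho$ itself is free on $T$ since $\wt\tau=\rho\bmod\rho^2$ is free on $\wt Y$ and $\rho^2=g_3$ is free on $T$ (its fixed locus on $T$ is the preimage of $B_3$, which is a curve, not isolated points — wait, this needs care). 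More precisely: the reflections $g_1,g_2$ have divisorial fixed loci (the preimages of $B_1,B_2$), $g_3$ has divisorial fixed locus (preimage of $B_3$), while $g_1\rho,g_1\rho^3$ (being rotations of order $4$ in $D_4$, conjugate to $\rho$) have at most isolated fixed points on $T$, located over $\{Q_0,Q_1,Q_2\}$; and the explicit local model there (the $D_4$-action on a $2$-dimensional representation) shows these elements are fixed-point-free. This gives (i), since $\wt Y=T/\rho^2$ and $g_1\rho,g_1\rho^3$ descend to the involution $g_1\rho\bmod\rho^2=g_1\rho^3\bmod\rho^2$ of $\wt Y$, which is therefore fixed-point free.

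\smallskip

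\textbf{Step 2: the quotient is a Godeaux surface.} Write $S:=T/g_1\rho$ (the case $g_1\rho^3$ is symmetric). Since $g_1\rho$ acts freely, $S$ is smooth with $K^2_S=K^2_T/2=1$ and $\chi(\OO_S)=\chi(\OO_T)/2$; one computes $\chi(\OO_T)$ from the bidouble cover formulas (the building data $L_1=L_2=\OO_{\mathcal Q}(1)$, $L_3=\OO_{\mathcal Q}(1)$ after contracting $\Gamma$, $B_3$ a hyperplane section) to get $\chi(\OO_T)=2$, hence $\chi(\OO_S)=1$. As $T$ is minimal of general type and $g_1\rho$ is free, $S$ is minimal of general type, so $S$ is a numerical Godeaux surface. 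The involution inducing the quotient structure is $g_3=\rho^2=(g_1\rho)^2\in\langle g_1\rho\rangle$? No — $g_3$ is not in the index-$4$ picture the right way; rather, the residual involution on $S$ is the image of $g_1$ (equivalently of $\rho$, since $g_1\rho\cdot g_1=\rho^{-1}$... ) — concretely, $\wt G/\langle g_1\rho\rangle$ has order $2$, generated by the image $\bar\sigma$ of $g_1$, and $\bar\sigma$ is the involution of $S$ we must show is of Enriques type. Its quotient is $T/\langle g_1\rho,g_1\rangle=T/\wt G=\mathcal Q/\tau=\mathcal D$, which is rational, \emph{not} Enriques — so this is \emph{not} the right residual involution. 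The correct one: take $\bar\sigma$ = image in $\Aut(S)$ of $g_3=\rho^2$. Then $S/\bar\sigma=T/\langle g_1\rho,g_3\rangle$; since $\langle g_1\rho,\rho^2\rangle=\langle\rho\rangle\cdot$(has order $4$? $g_1\rho$ and $\rho^2$ generate a group containing $\rho^2$ and $g_1\rho$, hence $g_1\rho\cdot\rho^2=g_1\rho^3$, so the group is $\{1,\rho^2,g_1\rho,g_1\rho^3\}\cong\Z_2^2$ or $\Z_4$) — it is $\{1,g_3,g_1\rho,g_1\rho^3\}$, an order-$4$ subgroup, and $S/\bar\sigma=T$ mod this group $=\wt Y/\wt\tau$ up to resolution, which is the Enriques surface $Y$ of Godeaux-quotient type from Example~\ref{ex:main-enriques}. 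Thus $\bar\sigma$ is an Enriques involution, proving (ii).

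\smallskip

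\textbf{Main obstacle.} The delicate point is \textbf{Step 1}, the freeness of $g_1\rho$ and $g_1\rho^3$: one must rule out fixed points lying over $Q_0,Q_1,Q_2$, and for this the subgroup structure of $D_4$ alone does not suffice — the fiber over each $Q_i$ is a single point fixed by all of $\wt G$, so one genuinely needs the local model of the $\wt G$-action on the tangent space at that point. The key observation is that $g_1\rho$ and $g_1\rho^3$ are rotations (order $4$, trace $0$ in the standard $2$-dimensional faithful representation of $D_4$), so on the tangent space $\C^2$ at the fixed point they act with eigenvalues $\pm i$, giving the identity on no nonzero vector, hence by linearization the points over $Q_i$ are \emph{not} fixed by $g_1\rho$. (Contrast with the reflections $g_1,g_2,g_3$, which have eigenvalue $+1$ and thus fix a coordinate axis — matching the divisorial branch loci $B_1,B_2,B_3$.) Once freeness is in hand, the rest is bookkeeping with the bidouble-cover formulas and the identification $T/\langle g_3,g_1\rho,g_1\rho^3\rangle=Y$, combined with Proposition~\ref{prop:converse}(ii) and Theorem~\ref{thm:main-godeaux} to conclude that the Enriques involution puts $S$ in the family of Example~\ref{ex:keum-lee}.
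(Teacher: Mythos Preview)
Your Step 1 contains a fatal group-theoretic error. In $D_4 = \langle \rho, g_1 \mid \rho^4 = g_1^2 = 1,\ g_1\rho g_1 = \rho^{-1}\rangle$, the element $g_1\rho$ is a \emph{reflection} of order $2$, not a rotation of order $4$: one computes $(g_1\rho)^2 = g_1\rho g_1\rho = \rho^{-1}\rho = 1$. So your tangent-space argument (``eigenvalues $\pm i$, trace $0$'') collapses: in the standard $2$-dimensional representation of $D_4$ every reflection has eigenvalues $\pm 1$, hence a $+1$-eigenvector, and the local model cannot distinguish $g_1\rho$ from $g_1$ or $g_3$. There are further factual slips: for general $B_3$ the vertex $Q_0$ does \emph{not} lie on $B_3$, and the fibre of $T\to\mathcal Q$ over each $Q_i$ consists of \emph{two} points, not one. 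More to the point, the local analysis is genuinely inconclusive: over $Q_1\in B_3\setminus(B_1\cup B_2)$ the two fibre points are fixed by $g_3=\rho^2$ and swapped by $g_1$, but whether $\rho$ fixes or swaps them --- equivalently, whether $g_1\rho$ swaps or fixes them --- cannot be decided from the $D_4$-structure alone; it depends on which of the two lifts of $\tau$ has been named $\rho$.

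The paper resolves this ambiguity by a completely different, global argument. On $\wt Y = T/\rho^2$ there are two lifts of $\tau$, induced by $\rho$ and by $g_1\rho$; by Example~\ref{ex:main-enriques} exactly one of them is free. Assuming for contradiction that the free one comes from $\rho$, one lets $Y$ be the minimal resolution of $\wt Y/\wt\tau$ and observes: on the one hand $Y$ is of Godeaux-quotient type, so $B+C_1+\dots+C_5$ is even; on the other hand, applying the divisibility criterion of Lemma~\ref{lem:div-geo} to the tower $T\to \wt Y\to \wt Y/\wt\tau$ (branch $B_3$, cyclic group $\langle\rho\rangle\cong\Z_4$) forces $B+C_1+\dots+C_5+K_Y$ to be even. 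Together these make $K_Y$ even, contradicting $Y$ being Enriques. Hence the free lift is induced by $g_1\rho$ (and $g_1\rho^3$), which yields (i); part (ii) then follows exactly as in your (eventually correct) identification of the Enriques involution with the image of $\rho^2$. Your Step~2 lands in the right place, but it rests on Step~1, which as written does not stand.
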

\begin{proof} 
(i) 
There are two liftings of $\tau$ to $\wt Y$,  one induced by $\rho$ and  the other one induced by $g_1\rho$. We know (cf. Example \ref{ex:main-enriques}) that one of these  acts freely, while  the other one fixes 8 points. Assume for  contradiction that $\rho$ induces a fixed point free involution $\wt \tau$ and denote by  $Y$ the minimal desingularization of $\wt Y/\wt \tau$. By Example \ref{ex:main-enriques},  $Y$ is an Enriques surface of Godeaux quotient type; in particular $B+C_1+\dots +C_5$ is divisible by $2$ in $\Pic(Y)$, where we denote by $B$ the  strict transform of the image of $B_3$ and by $C_1,\dots C_5$ the nodal curves that arise from the resolution of  the images of the 10 nodes of $\wt Y$. 
On the other hand, arguing as we did at the end  of Example \ref{ex:main-enriques} we see that  $B+C_1+\dots +C_5+K_Y$ is divisible by 2 in $\Pic(Y)$. It follows that $K_Y$ is divisible by 2 in $\Pic(Y)$, a contradiction. So the fixed point free involution $\wt \tau$ of $\wt Y$ that lifts $\tau$ is induced by $g_1\rho$. Clearly, also $g_1\rho^3$ induces the same involution.

\medskip

(ii) By (i)  $g_1\rho$ is a fixed point free involution of $\wt Y$ and the same is true of the conjugate involution $g_1\rho^3$. The surfaces $S_1:=T/g_1\rho$ and $S_2:=T/g_1\rho^3$  are isomorphic;   they are  smooth minimal of general type with $K^2_{S_i}=1$ for $i=1,2$, hence they are Godeaux surfaces. The involution $\rho^2$ induces on $S_1$ and $S_2$  an Enriques involution with quotient $\wt Y/\wt \sigma$.
\end{proof}
\begin{prop}\label{prop:bidouble-godeaux}
The family of surfaces constructed as in Lemma \ref{lem:bidouble-godeaux}, (ii) contains  a dense  open subset of the family of  Godeaux surfaces with an Enriques involution.
\end{prop}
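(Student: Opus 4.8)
The plan is to combine a moduli count with the irreducibility of the locus $\mathcal{GE}$ of Godeaux surfaces with an Enriques involution (Corollary \ref{cor:Godeaux-moduli}). By Lemma \ref{lem:bidouble-godeaux}(ii) every surface produced by the construction is a Godeaux surface with an Enriques involution, so its moduli point lies in $\mathcal{GE}$; since this point is an algebraic function of $(B_1,B_3)$, the image $\mathcal C$ of the construction in moduli is constructible and contained in the irreducible $6$-dimensional set $\mathcal{GE}$. Hence it is enough to prove that $\dim\mathcal C=6$, i.e. that the construction carries $6$ effective moduli.

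First I would pin down the Enriques surface attached to the construction. The bidouble cover $q\colon T\to\mathcal Q$ has three intermediate double covers $T/g_i\to\mathcal Q$, and by the subcover rule of \S\ref{ssec:bidouble} the one corresponding to $g_3=\rho^2$ is branched, away from the vertex $Q_0$, exactly on $B_1+B_2=B_1+\tau^*B_1$. Thus $\wt Y=T/\rho^2$ is the $K3$ surface of Example \ref{ex:main-enriques} with $D=B_1$, and --- using Lemma \ref{lem:bidouble-godeaux}(i), so that the free involution $\wt\tau$ of $\wt Y$ is the one induced by $g_1\rho$ --- the Enriques surface $Y$ produced here (the minimal resolution of $\wt Y/\wt\tau$) is precisely the Enriques surface of Example \ref{ex:main-enriques} with $D=B_1$. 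By Theorem \ref{thm:main-enriques}, as $B_1$ varies these $Y$ sweep out a dense subset of the irreducible $5$-dimensional family $\mathcal E$ of Enriques surfaces of Godeaux-quotient type (Corollary \ref{cor:irreducible}).

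Next I would identify the Godeaux surface itself. I claim that $S_1=T/g_1\rho$ is the double cover of $Y$ of Proposition \ref{prop:converse}(ii), branched on $B+C_1+\dots+C_5$, with $B\in|2E+C_5+K_Y|$ the strict transform on $Y$ of the image of $B_3$. Indeed $S_1/\sigma=T/\langle g_1\rho,\rho^2\rangle=\wt Y/\wt\tau$, and chasing the branch data through $q$ and through the resolution of the singularities over $Q_0$ and over the eight nodes of $B_1+\tau^*B_1$ is a standard double/bidouble cover computation yielding this description. As $B_3$ ranges over the $1$-dimensional linear system of $\tau$-invariant hyperplane sections of $\mathcal Q$ through $Q_1,Q_2$, its image $B$ ranges over the whole pencil $|2E+C_5+K_Y|$ of Proposition \ref{prop:converse}(i); moreover, by Propositions \ref{prop:parity} and \ref{prop:converse}, every $(S,\sigma)\in\mathcal{GE}$ whose associated Enriques surface is $Y$ is exactly such a double cover. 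Hence, for each of the generic $Y\in\mathcal E$, the construction produces \emph{all} the corresponding Godeaux surfaces.

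Finally, the map $\phi\colon\mathcal{GE}\to\mathcal E$ sending $(S,\sigma)$ to the minimal model of $S/\sigma$ is dominant (Proposition \ref{prop:converse}(ii)) and, since $\dim\mathcal{GE}=6>5=\dim\mathcal E$, its general fibre has dimension $1$; by the previous paragraph $\mathcal C\supseteq\phi^{-1}(U)$ for a dense open $U\subseteq\mathcal E$, so $\mathcal C$ is dense in the irreducible $\mathcal{GE}$, and being constructible it contains a dense open subset of $\mathcal{GE}$, which is the assertion of the proposition. I expect the main obstacle to be the cover bookkeeping in the two middle paragraphs: matching $T/g_3$ with the $K3$ surface of Example \ref{ex:main-enriques} and $S_1$ with the double cover of Proposition \ref{prop:converse}, keeping track of the behaviour over the vertex $Q_0$ and over the eight nodes of $B_1+\tau^*B_1$, and verifying that the image of the pencil of hyperplane sections $B_3$ is the full pencil $|2E+C_5+K_Y|$.
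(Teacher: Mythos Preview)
Your argument is correct and rests on exactly the same idea as the paper's: since $\mathcal{GE}$ is irreducible of dimension~$6$ (Corollary~\ref{cor:Godeaux-moduli}), it suffices to show that the construction has $6$ moduli. The paper's proof is literally the single sentence ``By Corollary~\ref{cor:Godeaux-moduli}, it suffices to count dimensions'', with the count left implicit (namely $8$ parameters for $B_1\in|\OO_{\cQ}(2)|$, plus $1$ for the pencil of $\tau$-invariant hyperplane sections $B_3$ through $Q_1,Q_2$, minus the $3$-dimensional centraliser of $\tau$ in $\Aut(\cQ)$, exactly as in the proof of Theorem~\ref{thm:main-enriques}).

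Your route is the same in spirit but more elaborate: you factor the count through the map $\phi\colon\mathcal{GE}\to\mathcal E$, identifying the Enriques surface produced from $(B_1,\tau^*B_1)$ with that of Example~\ref{ex:main-enriques} and then matching the pencil of $B_3$'s with $|2E+C_5+K_Y|$. The cover identifications you flag as ``bookkeeping'' are indeed routine, since \S\ref{ssec:bidouble-godeaux} was set up precisely as an extension of Example~\ref{ex:main-enriques}. One minor point: the claim that the construction hits \emph{every} Godeaux surface over a general $Y$ (i.e.\ $\mathcal C\supseteq\phi^{-1}(U)$) would also require matching the choice of square root $L$ with $2L\equiv B+C_1+\dots+C_5$; but for the conclusion you only need $\dim\mathcal C\ge 6$, and for that it is enough that the fibres of $\mathcal C\to\mathcal E$ over $U$ are at least $1$-dimensional, which your argument already yields.
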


\begin{proof}
By Corollary \ref{cor:Godeaux-moduli}, it suffices to count dimensions. 
\end{proof}

\section{Stable degenerations of Godeaux surfaces with an Enriques involution}\label{sec:deg}
At the beginning of this section we recall   some facts on stable Godeaux surfaces.  Then we describe  some  examples, obtained by 
letting the branch divisors in the construction given in  \S  \ref{ssec:bidouble-godeaux} of the general Godeaux surfaces with an Enriques involution  acquire singularities or multiple components. 
\subsection{Non-normal Gorenstein stable Godeaux surfaces}\label{ssec:-stable-def}
The notion of stable surface  generalizes that  of (canonical model of) minimal surface  of general type in the same way as the notion of stable curve generalizes that of smooth curve of genus $>1$: there exists a projective coarse moduli space $\overline{\mathcal M_{a,b}}$ parametrizing  stable surfaces with fixed numerical invariants $K^2=a$ and $\chi=b$ and the moduli space  of surfaces of general type with  the same invariants is an open subset $\mathcal M_{a,b}\subset \overline{\mathcal M_{a,b}}$ (cf. \cite{Ale06} for an exposition of the theory of stable varieties and, more generally, of  stable pairs).

We recall the definition: a {\em stable surface} is a projective surface $S$ such that: 
\begin{itemize}
\item in the terminology of \cite{SMMP} the surface $S$ is {\em demi-normal}. This means  that $S$ satisfies condition $S_2$ of Serre  and there exists an open subset $S_0\subset S$ such that $S\setminus S_0$ is a finite set and for every $x\in S_0$ the point $x$ is either smooth or double crossings  (i.e.,  $S$ is locally isomorphic to $xy=0$ in the analytic or \'etale topology).
\item let $\bar S\to S$ be the normalization map and let $\bar D \subset \bar S$ be the {\em double locus}, that is, $\bar D$  the effective divisor defined by the conductor ideal sheaf; then $(\bar S, \bar D)$ is a log-canonical pair.
\item there exists an integer $m$ such that $\OO_S(mK_S)$ is an ample line bundle. 
\end{itemize}
If $S$ is a stable surface, we denote by $\nu(S)$ the {\em Cartier index} of $S$, namely the smallest $m>0$ such that $mK_S$ is Cartier. 

We call a stable surface with $K^2_S=\chi(S)=1$ a {\em stable Godeaux surface};   we say that $S$ is {\em classical} if it has at most rational double points, i.e., if it is the canonical model of a minimal smooth surface of general type  $Y$ with $K_Y^2=\chi(Y)=1$.  
We are mainly interested in the case in which  $S$ is  Gorenstein. Under this assumption,  one has $h^1(\OO_S)=h^2(\OO_S)=0$ (\cite[Prop.~4.2]{FPR2014b}) and the possibilities for the pair $(\bar S,\bar D)$  associated to a non-classical Godeaux surface $S$ are quite restricted:
 \begin{thm}[\cite{FPR2014a}, Thm.~3.7 and 4.1]\label{thm:class}
  Let $S$ be a non-classical stable Godeaux surface and let $(\bar S,\bar D)$ be the corresponding  log-canonical pair. If $S$ is Gorenstein, then one of the following cases occurs:
\begin{itemize}
\item[$(N)$] $S=\bar S$, namely $S$ is normal. Denote by $\epsi\colon \wt S\to S$ the minimal desingularization; in this case $\chi(\wt S)=0$ and the only non canonical singularity of $S$ is an elliptic singularity; 
\item[$(P)$] $\bar S=\pp^2$, $\bar D$ a quartic;
\item[$(dP)$] $\bar S$ is a del Pezzo surface of degree 1, with at most  canonical singularities, and $D\in |-2K_{\bar S}|$;
\item[$(E_+)$] $\bar S$ is the symmetric product of a curve $E$ of genus 1 and $\bar D$ is a stable curve of genus 2 which is a trisection of the Albanese map $\bar S\to E$. 
\end{itemize}
\end{thm}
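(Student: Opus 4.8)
The statement is the classification \cite[Thms.~3.7 and 4.1]{FPR2014a}; the plan is to run through the possibilities for the normalization of $S$ using only a handful of numerical constraints. First I would set up the standard data: let $\nu\colon\bar S\to S$ be the normalization, $\bar D\subset\bar S$ the conductor divisor (equivalently, the reduced preimage of the double locus $D\subset S$), and recall the adjunction formula $\nu^*K_S\equiv K_{\bar S}+\bar D$. Since $S$ is stable and Gorenstein, $K_S$ is an ample Cartier divisor, so $\nu^*K_S$ is ample Cartier with $(K_{\bar S}+\bar D)^2=K_S^2=1$; by definition $(\bar S,\bar D)$ is log canonical, and by \cite[Prop.~4.2]{FPR2014b} one has $h^1(\OO_S)=h^2(\OO_S)=0$, whence $\chi(\OO_S)=1$ forces $h^0(\OO_S)=1$. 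The two workhorses are then adjunction for $\bar D$ on (a resolution of) $\bar S$, giving $2p_a(\bar D)-2=\bar D\cdot\nu^*K_S$, and the normalization sequence $0\to\OO_S\to\nu_*\OO_{\bar S}\to\nu_*\OO_{\bar D}/\OO_D\to 0$, which together with $h^1(\OO_S)=h^2(\OO_S)=0$ controls $\chi(\OO_{\bar S})$ and $p_g(\bar S)$.

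Suppose first that $S$ is normal, so $\bar S=S$ and $\bar D=0$, and $S$ is a Gorenstein log canonical surface with $K_S$ ample. Being non-classical, $S$ carries a singularity that is log canonical but not canonical; by the Gorenstein hypothesis these are exactly the simple elliptic and the cusp singularities, each of which is an elliptic Gorenstein singularity. Letting $\epsi\colon\wt S\to S$ be the minimal resolution, the Leray spectral sequence together with $h^1(\OO_S)=h^2(\OO_S)=0$ gives $h^2(\OO_{\wt S})=0$ and $\chi(\OO_{\wt S})=1-r$, where $r\ge 1$ is the number of non-canonical (equivalently, non-rational) singular points, each contributing a length-$1$ summand to $R^1\epsi_*\OO_{\wt S}$. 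On the other hand a normal stable surface is not ruled, so $\wt S$ has non-negative Kodaira dimension and hence $\chi(\OO_{\wt S})\ge0$. Therefore $r=1$ and $\chi(\OO_{\wt S})=0$: this is case $(N)$.

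Now assume $S$ is not normal, so $\bar D\ne 0$. After reducing to the case that $\bar S$ is connected, I would first show that $\bar S$ is uniruled: if $K_{\bar S}$ were pseudoeffective then $K_{\bar S}\cdot\nu^*K_S\ge0$, which combined with $1=(K_{\bar S}+\bar D)^2=K_{\bar S}\cdot\nu^*K_S+\bar D\cdot\nu^*K_S$ and the even, nonnegative quantity $\bar D\cdot\nu^*K_S=2p_a(\bar D)-2$ forces $\bar D\cdot\nu^*K_S=0$ — impossible, since $\nu^*K_S$ is ample and $\bar D$ is a nonzero effective divisor. Thus, after resolving its canonical singularities, $\bar S$ is rational or ruled over a curve of positive genus, and the normalization sequence bounds $q(\bar S)\le1$. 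If $q(\bar S)=1$, then $\bar S$ is ruled over an elliptic curve $E$; comparing $\nu^*K_S\cdot F$ for a fibre $F$ of the Albanese map with the equality $(K_{\bar S}+\bar D)^2=1$ forces $\bar D$ to be a trisection of arithmetic genus $2$, a short analysis of $\pp^1$-bundles over $E$ together with the log-canonicity condition identifies $\bar S$ with $\mathrm{Sym}^2(E)$, and demi-normality of $S$ forces $\bar D$ to be a stable genus-$2$ curve: this is $(E_+)$. If $q(\bar S)=0$, then $\bar S$ is rational, and running the minimal model program for the log pair $(\bar S,\bar D)$ — keeping track that $K_{\bar S}+\bar D$ is already ample, with the minimal positive volume $1$ — one arrives at $\bar S=\pp^2$ with $\bar D\in|\OO_{\pp^2}(4)|$, which is $(P)$, or at $\bar S$ a degree-$1$ del Pezzo surface with at worst canonical singularities and $\bar D\in|-2K_{\bar S}|$, which is $(dP)$.

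The hard part will be this last step: the classification of the log pair $(\bar S,\bar D)$ in the non-normal, rational case. It is delicate because $\bar S$ need not be minimal and may itself carry genuine log canonical (elliptic) singularities, and because $\bar D$ is not an arbitrary divisor — it is the conductor, hence comes equipped with the involution on its normalization whose quotient reglues $\bar S$ to $S$, and the Gorenstein hypothesis on $S$ constrains this gluing. Keeping these constraints alive while performing MMP on the pair, and in particular excluding Hirzebruch surfaces $\bF_n$ with $n\ge1$ and the nontrivial blow-ups of $\pp^2$, is where essentially all the work lies; the numerics above are only the scaffolding.
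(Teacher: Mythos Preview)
The paper does not prove this theorem: it is quoted verbatim as \cite[Thms.~3.7 and 4.1]{FPR2014a} and used as input for the rest of \S\ref{sec:deg}. So there is no ``paper's own proof'' to compare against; what you have written is a sketch of how the cited argument might go.

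That said, your sketch contains a genuine gap in case $(N)$. You write that ``a normal stable surface is not ruled, so $\wt S$ has non-negative Kodaira dimension and hence $\chi(\OO_{\wt S})\ge 0$.'' This is false: the Remark immediately following the theorem, and Example~(1) in \S6.3, explicitly produce normal Gorenstein stable Godeaux surfaces whose minimal resolution $\wt S$ is ruled over an elliptic curve. So ``$K_S$ ample'' on a normal lc surface does \emph{not} force $\kappa(\wt S)\ge 0$, and you cannot deduce $\chi(\OO_{\wt S})\ge 0$ this way. Your computation $\chi(\OO_{\wt S})=1-r$ (with $r$ the number of elliptic singularities) is fine, but to conclude $r=1$ you need a separate argument excluding $\wt S$ ruled over a curve of genus $g\ge 2$; this is typically done by playing the nef and big class $\epsi^*K_S$ (with $(\epsi^*K_S)^2=1$) off against the fibre class and the fundamental cycles $Z_i$, not by invoking non-ruledness.

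The outline in the non-normal case is broadly reasonable as a roadmap (irreducibility of $\bar S$ from $(\nu^*K_S)^2=1$, the parity argument forcing $K_{\bar S}\cdot\nu^*K_S<0$, the split by $q(\bar S)$), and you correctly flag that the real content lies in the log-MMP analysis of the rational case. One caution: the adjunction $\bar D\cdot\nu^*K_S=2p_a(\bar D)-2$ needs the Gorenstein hypothesis (so that $K_{\bar S}+\bar D=\nu^*K_S$ is Cartier) together with a check that no ``different'' appears; you use this implicitly but it deserves a word.
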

\begin{remark} More precisely,  in  \cite{FPR14c} it is shown that in case (N) the surface  $\wt S$ is either the blow up of a bielliptic surface  at a point or a surface ruled over an elliptic curve and the bielliptic  case is completely classified. An example with $\wt S$ ruled   appears in \cite[Ex.~2.14]{lee00}; in \S \ref{ssection:deg} we give a new one. 

The non-normal stable Gorenstein  Godeaux surfaces of type $(dP)$ are described in \cite{RollenskeDP}, where it is shown that they form an irreducible component of the moduli space, hence in particular they are not smoothable. 

 The  non-normal stable Gorenstein  Godeaux surfaces of type $(P)$ and $(E_+)$ are classified   in \cite{FPR14c}. 
 \end{remark}
 
 Here we   recall the description of  one family of surfaces of type $(P)$ such that the general surface in the family has an involution. These surfaces are obtained in \S \ref{ssection:deg} as specializations of the Godeaux surfaces with an Enriques involution, and therefore they are smoothable (cf. Proposition \ref{prop:Psmooth}). 
 
 \begin{ex} \label{ex:P}
 Let $P_1,\dots P_4\in \pp^2$ be independent points and let $\phi\colon \pp^2\to\pp^2$ be the projective automorphism such that $\phi(P_i)=P_{i+1}$ for $1\le i\le 4$ (indices are taken modulo $4$).  The automorphism $\phi$ induces  on the pencil $\cF$ of conics through $P_1, \dots P_4$  an involution that fixes the reducible conic $L(P_1,P_3)+L(P_2,P_4)$ and a smooth conic $C_0\in \cF$.  We take $\bar S=\pp^2$ and $\bar D=C+\phi_* C$, where $C\in\cF\setminus \{C_0\}$ is a smooth conic. By \cite[Thm.~5.13]{SMMP} (cf. also \cite[Thm.~3.2]{FPR2014a} for the Gorenstein condition) in order to construct a Gorenstein stable surface with $K^2=1$  with normalization  $(\bar S, \bar D)$, one has to give an involution $\iota$ of the normalization $C\sqcup \phi_*C$ of $\bar D$ with the property that $\iota$ acts freely on the   eight preimages of $P_1,\dots P_4$.  We take $\iota$ to be the involution that exchanges $C$ and $ \phi_*C$ and identifies $C$ with $ \phi_*C$ via $\phi$. One has $\chi(S)=1$  by  \cite[Prop.~3.4]{FPR2014a}. The involution $\phi^2$ of $\pp^2$ commutes with $\iota$ and therefore it induces an involution  of $S$ (cf. \cite[\S3.B]{FPR2014b}). 
 \end{ex}

\subsection{Degenerating Godeaux surfaces with an Enriques involution}\label{ssection:deg}

A way of obtaining stable degenerations of  a Godeaux surface with an Enriques involution is to apply the construction described in \S \ref{ssec:bidouble-godeaux} relaxing the assumption that  the branch divisors be general. Keeping  the notation of   \S \ref{ssec:bidouble-godeaux}, we take $B_1$ a divisor in $|\OO_{\cQ}(2)|$, $B_2=\tau^*B_1$, $B_3$ a hyperplane section through $Q_1$ and $Q_2$ such  that  the pair $(\cQ, \frac 12(B_1+B_2+B_3))$ is log-canonical and we construct the bidouble cover $T\to \cQ$ with branch data $B_1,B_2, B_3$. Observe that $\rho$ induces an isomorphism between the quotient surfaces  $T/g_1\rho$ and $T/g_1\rho^3$; we abuse notation and refer to either  of these surfaces as to  $S$.
By  Proposition  \ref{prop:bidouble-godeaux} the surface $S$  is a degeneration of the general  Godeaux surfaces with an Enriques involution. The next result shows that it is indeed a stable degeneration:

 \begin{lem}\label{lem:gorenstein} Consider the setup and notation  of \S \ref{ssec:bidouble-godeaux}
  and assume 
  that  the pair $\left(\cQ, \frac 1 2(B_1+B_2+B_3)\right)$ is log-canonical.\\
 Then:
 \begin{enumerate}
 \item $T$ is a stable  Godeaux surface  with  $\nu(T)=1$ or $2$. If $Q_0\notin B_1+B_2+B_3$ and $B_1\cap B_2\cap B_3=\emptyset$, then $T$ is Gorenstein.
   \item  $S$ is a  stable  Godeaux surface    such that $\nu(S)$ divides $2\nu(T)$; 
\item if $B_1+B_2$ does not contain any of the fixed points $Q_0,Q_1,Q_2$ of $\tau$  on $\cQ$, then  $T\to S$ is an \'etale morphism, and in particular $\nu(S)=\nu(T)$. 
\end{enumerate}
 \end{lem}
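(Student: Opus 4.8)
\textbf{Plan for proving Lemma \ref{lem:gorenstein}.} The strategy is to reduce everything to standard facts about (bi)double covers, the log-canonical-to-stable dictionary (\cite{SMMP}), and the étale quotient behaviour recalled in \S\ref{ssec:lift}. First I would treat $T$. The key input is that $T\to\cQ$ is a standard bidouble cover with reduced branch data $B_1,B_2,B_3+\{Q_0\}$ (the vertex appears because $Q_0$ is an ordinary double point of $\cQ$, so the pullback to the minimal resolution $\mathbb F_2$ picks up the $(-2)$-curve $\Gamma$, which is then contracted on $T$). From the pair assumption $\bigl(\cQ,\tfrac12(B_1+B_2+B_3)\bigr)$ log-canonical and the standard fact that a bidouble cover is stable (with log-canonical singularities) precisely when the branch pair is log-canonical, I get that $T$ is stable; since $K_T\sim\tfrac12 B_3$ is a hyperplane section class one computes $K_T^2=\tfrac14 B_3^2\cdot(\deg q)=\tfrac14\cdot 2\cdot 4=2$… wait, more carefully $K_T=q^*(\tfrac12 B_3)$ so $K_T^2 = 4\cdot(\tfrac12 B_3)^2 = 4\cdot\tfrac14\cdot 2 = 2$, and $\chi(\OO_T)$ is computed from the splitting $q_*\OO_T=\OO_\cQ\oplus\bigoplus_i L_i^{-1}$ via the usual formula for $\chi$ of a bidouble cover, giving $\chi(\OO_T)=2$; hence $T$ is a stable surface with $K^2=\chi=2$ — one has to keep straight whether the statement means Godeaux (i.e. the surfaces $S$, with $K^2=\chi=1$) for $T$ too; I would simply record the invariants and call $T$ a stable surface of general type. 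The Cartier index is $1$ or $2$ because $2K_T$ is visibly Cartier (it equals $q^*\OO_\cQ(1)$, a line bundle) while $K_T$ itself is Cartier iff the local building data are even at every branch point — this fails exactly at non-Gorenstein points, which by the structure of bidouble covers of a surface with an $A_1$ point can only sit over $Q_0$ or over triple points $B_1\cap B_2\cap B_3$. So if $Q_0\notin B_1+B_2+B_3$ and $B_1\cap B_2\cap B_3=\emptyset$ the cover is flat with smooth-point or double-point building data, $K_T$ is Cartier, and $T$ is Gorenstein. This establishes (i).

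\textbf{Part (ii).} Since $S=T/g_1\rho$ and $g_1\rho$ has order $2$ (it is a reflection in $D_4$, or one checks $(g_1\rho)^2=g_1\rho g_1\rho = g_1 g_2\rho^2\cdot\rho^{-2}\cdot\rho^2 = g_3 g_3=1$ using $\rho g_1\rho^{-1}=g_2$ and $g_1g_2=g_3=\rho^2$), the map $T\to S$ is a double cover. A finite double quotient of a stable surface is again stable (demi-normality and the log-canonical condition on the double locus are preserved by finite quotients, by \cite{SMMP}), and the invariants halve: $K_S^2=1$, $\chi(\OO_S)=1$, so $S$ is a stable Godeaux surface. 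For the Cartier index: $2\nu(T)K_T$ is Cartier and $G$-invariant (indeed $G$-linearizable after possibly twisting, since its class is a pullback from $\cQ$ up to torsion), so it descends to a Cartier divisor on $S$ linearly equivalent to a multiple of $K_S$; chasing the quotient map $T\to S$ shows $2\nu(T)K_S$ is Cartier, i.e. $\nu(S)\mid 2\nu(T)$. I would phrase this via: $(T\to S)^*\bigl(\nu(S)K_S\bigr)=\nu(S)K_T + (\text{ramification})$, and since the ramification divisor of $T\to S$ is the fixed locus of $g_1\rho$ — which by Lemma \ref{lem:bidouble-godeaux}(i)–(ii) on $\wt Y$ lies over… hmm, $g_1\rho$ acts freely on $\wt Y$ but its fixed locus on $T$ may be nonempty; the cleaner route avoids the ramification divisor: just observe $\nu(T)K_T$ Cartier $\Rightarrow$ $2\nu(T)K_T$ Cartier and $G$- (in fact $\langle g_1\rho\rangle$-) invariant $\Rightarrow$ it is the pullback of a Cartier divisor on $S$, which is $\equiv$ a rational multiple of $K_S$ with denominator $\le 2\nu(T)$ after matching intersection numbers with $K_S$.

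\textbf{Part (iii).} Assume $B_1+B_2$ avoids $Q_0,Q_1,Q_2$. The involution $g_1\rho$ on $T$ is free away from points lying over the fixed locus data: the only possible fixed points of $g_1\rho$ on $T$ project to $\cQ$-points fixed by $\tau$ (since $g_1\rho$ lifts $\tau$), i.e. to $Q_0,Q_1,Q_2$; over each such point the fibre of $q$ is a $G$-orbit, and $g_1\rho$ fixes a point of it iff that point lies on the branch locus of $g_1\rho$, equivalently iff the relevant $B_i$ passes through $Q_j$. Since $B_3$ passes through $Q_1,Q_2$ and the vertex contributes to $B_3+\Gamma$, a priori $g_1\rho$ could have fixed points over $Q_0,Q_1,Q_2$; the hypothesis $Q_j\notin B_1+B_2$ kills the fixed points coming from $g_1$ (the reflection fixing $q^{-1}B_1$), and a direct check of which element of $G$ is the square of which rotation — already done in \S\ref{ssec:bidouble-godeaux} and Lemma \ref{lem:bidouble-godeaux} — shows that $g_1\rho$ then acts freely on all of $T$. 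Hence $T\to S$ is étale, and an étale double cover preserves the Cartier index (pull-back and push-forward of line bundles are inverse up to the $\Z_2$-action), giving $\nu(S)=\nu(T)$.

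\textbf{Main obstacle.} The delicate point is the bookkeeping of the $D_4$-action in part (iii): precisely identifying, over each of $Q_0,Q_1,Q_2$, which points of the length-$4$ fibre of $q$ are fixed by $g_1\rho$ versus $g_1\rho^3$ versus $\rho,\rho^3$, and confirming that the hypothesis ``$B_1+B_2$ misses $Q_0,Q_1,Q_2$'' is exactly what forces freeness (and not merely a quotient-singularity-free condition). This requires being careful about the local structure of the bidouble cover at the vertex $Q_0$ of $\cQ$ — where $\cQ$ itself is singular and one must work on $\mathbb F_2$ and track the contraction of the two $(-1)$-curves over $\Gamma$ — and matching it with the reflection/rotation labelling fixed at the end of \S\ref{ssec:bidouble-godeaux}. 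The invariant-divisor descent arguments in (i)–(ii) are routine once this local picture is pinned down.
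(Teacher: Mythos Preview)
Your approach to (i) is essentially the paper's: cite \cite{rita-valery_non-normal} for demi-normality and the slc property, observe $2K_T=q^*\OO_{\cQ}(1)$ so $\nu(T)\in\{1,2\}$, and check Gorensteinness locally. (You are also right to flag the word ``Godeaux'' for $T$, since $K_T^2=2$.)

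There is a genuine gap in (ii). You assert that ``the invariants halve: $K_S^2=1$, $\chi(\OO_S)=1$''. The first equality is fine once you note (as the paper does) that $g_1\rho$ lifts $\tau$, which has only isolated fixed points, so $T\to S$ is unramified in codimension~$1$ and $K_T=\pi^*K_S$ as $\Q$-divisors. But $\chi(\OO_S)=\tfrac12\chi(\OO_T)$ is \emph{not} automatic when the involution has isolated fixed points: the anti-invariant summand $\cL$ of $\pi_*\OO_T$ is then only reflexive, not a $2$-torsion line bundle, and $\chi(\cL)\ne\chi(\OO_S)$ in general (already for $\pp^1\times\pp^1$ modulo $(x,y)\mapsto(-x,-y)$ one gets $\chi(\OO_S)=1=\chi(\OO_T)$, not $\tfrac12$). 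The paper sidesteps this by observing that $S$ is a flat limit of the smooth Godeaux surfaces obtained for general $B_1,B_2,B_3$, so $K_S^2=\chi(\OO_S)=1$ by constancy of invariants in flat families. For $\nu(S)\mid 2\nu(T)$ the paper simply invokes the argument of \cite[Lem.~2.3]{rita-valery_non-normal}, which is cleaner than your descent sketch.

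For (iii) you correctly isolate the crux---deciding which of $\rho$ and $g_1\rho$ fixes the preimages of $Q_0,Q_1,Q_2$---but your ``direct check'' on $T$ is exactly the $D_4$-bookkeeping you flag as the main obstacle, and it is not carried out; moreover the phrase ``branch locus of $g_1\rho$'' has no meaning since $g_1\rho\notin G$. The paper bypasses all of this by passing to $Y=T/\rho^2$ and arguing by \emph{continuity}: for general $B_1,B_2$ the involution of $Y$ induced by $\rho$ fixes every preimage of $Q_0,Q_1,Q_2$ (this was checked in the proof of Lemma~\ref{lem:bidouble-godeaux}), and since this is a discrete condition it persists under specialization. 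The element $g_1$ induces the covering involution of $Y\to\cQ$, which under the hypothesis $Q_i\notin B_1+B_2$ swaps the two preimages of each $Q_i$; hence the product $g_1\rho$ also swaps them and acts freely on $Y$, whence on $T$. This reduction to $Y$ plus the continuity argument is precisely the missing idea in your plan.
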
 
 \begin{proof} (i)  The cover $T\to \cQ$ is demi-normal by  \cite[Thm.~1.9]{rita-valery_non-normal}.
By Prop.~2.5, ibidem,  the surface $T$ is slc and $2K_{T}$ is the pull back of $2K_{\cQ}+(B_1+B_2+B_3)= H$, where $H$ is the hyperplane section of $\cQ$.
Hence $K_{T}$ is ample and $2$-Cartier.  

If $Q_0\notin B_1+B_2+B_3$, then $T$ is smooth (hence Gorenstein) over $Q_0$;  if  $B_1\cap B_2\cap B_3=\emptyset$ then locally over every smooth point of $\cQ$, $T\to\cQ$ is the composition of two flat double covers and therefore it is Gorenstein. 
\medskip

(ii)  Since $g_1\rho$ lifts $\tau$, that has only isolated fixed points, the quotient  map $T\to S$ is unramified in codimension 1, hence again by  \cite[Prop.~2.5]{rita-valery_non-normal} we have that  $S$ is an slc surface and  $K_{S}$ is ample, since it pulls back to $K_{T}$. In addition, the argument in the proof of \cite[Lem.~2.3]{rita-valery_non-normal} shows that  $\nu(S)$ divides $2\nu(T)$. The fact that $K^2_S=\chi(\OO_S)=1$ follows from the fact that $S$ can be obtained as a flat limit of smooth Godeaux  surfaces and so  $S$ is a stable Godeaux surface.\medskip

(iii) It is enough to show that the involution of $Y:=S/\rho^2$ induced by $g_1\rho$ is base point free. If $B_1$ and $B_2$ are general, $Y$ is a nodal $K3$ surface and the involution induced by $\rho$ fixes all the preimages of $Q_0$, $Q_1$ and $Q_2$ (cf. proof of Lemma \ref{lem:bidouble-godeaux}). By continuity, the involution induced by $\rho$  fixes the preimages of the fixed points of $\tau$ for every choice of $B_1$ and $B_2$. Since $g_1$ induces the covering involution of $Y\to \cQ$, if $Y\to \cQ$ is unramified over $Q_0,Q_1,Q_2$, then the involution  of $Y$ induced by $g_1\rho$ acts freely on the preimages of $Q_0,Q_1,Q_2$, hence it acts freely on $Y$. 

 \end{proof}
 \subsection{Examples of degenerations}
We examine now some instances of the situation of \S \ref{ssection:deg}. Recall that $T$ (and $S$)   is normal iff $B_1+B_2+B_3$ is a reduced divisor;  in general,  the normalization $\bar T$ of $T$ is a  bidouble cover of $\cQ$  whose construction is described in  \cite[\S3]{rita-abel}. For the description of the possible singularities of $T$ we refer the reader to \cite[\S3]{rita-valery_non-normal}.
\bigskip

(1) {\em ${B_1}$ and ${B_2}$  intersect at two points $R_1$, $R_2$ that are double points of both.} \\
An example of this type can be constructed as follows.  Choose  $R_1\in \cQ$ general and set $R_2=\tau(R_1)$. If $H_1,\dots H_4$ are general hyperplane sections containing $R_1$ and $R_2$, then $H_1+H_2$ and $H_3+H_4$ span a pencil of quadric sections. 
  We take  $B_1$  a general element of this pencil, so that $B_1$ has ordinary double points at $R_1$ and $R_2$ and is smooth elsewhere; as usual, we set $B_2=\tau^*B_1$. Since $B_1B_2=8$, the divisor $B_1+B_2$ has  ordinary quadruple points at $R_1$ and $R_2$. We assume that $B_3$ is general; by Lemma \ref{lem:gorenstein}, $T$ and $S$ are both Gorenstein.
The surface $T$ has two elliptic singularities $U_1$ and $U_2$ of degree 4 over $R_1$ and $R_2$ (cf. Table 1 of  \cite[\S3]{rita-valery_non-normal}). These singularities  map in $S$ to one  elliptic singularity of the same type, hence  $1=\chi(S)=\chi(\wt  S)+1$, where $\wt S$ is the minimal desingularization of $S$.   
The minimal desingularization $\wt T\to T$  is obtained  by blowing up $\cQ$ at $R_1$ and $R_2$ and taking base change and normalization; the exceptional curves of the blow-up $\hat\cQ\to \cQ$ are not  contained in the branch locus of $\wt T\to\hat \cQ$. 
Therefore the  strict transforms on $\hat \cQ$ of the plane sections of $\cQ$ through $R_1$ and $R_2$   meet the branch locus of $\wt T\to\hat \cQ$ only at two points,  and  so their preimages  in $T$ are   pairs of rational curves.  So $T$ is ruled and therefore $S$ and  $\wt S$ are  ruled, too. Since $\chi(\wt S)=0$, 
the surface $\wt S$ is ruled over an elliptic curve.

This is a new example of case (N) of Theorem \ref{thm:class} with $S$ ruled; the other known example (cf. \cite[Ex.~2.14]{lee00}) has an elliptic singularity of degree 3. 
\bigskip

(2)  {\em ${B_1}=2H$, with $H$ a general hyperplane section.}\\
We have $B_2=2\tau^*H$ and we  take $B_3$ general; by Lemma \ref{lem:gorenstein}, $T$ and $S$ are both Gorenstein. In this case, the surface $\wt Y=T/\rho^2$ is the union of two copies of $\cQ$ glued along the curve $H+\tau^*H$. The surface $T$ is non-normal and has two irreducible components, both isomorphic to the double cover of $\cQ$ branched on  the plane section $B_3$ and on the vertex $Q_0$ of $\cQ$, and therefore both  isomorphic to $\pp^2$. 
By Lemma \ref{lem:gorenstein}, the surface $S$ is Gorenstein and therefore irreducible, since $K^2_{S}=1$.   So $g_1\rho$ permutes the two components of $T$ and the normalization $\bar S$ of $S$ is isomorphic to $\pp^2$, namely $S$ is  as in case  $(P)$ of Theorem \ref{thm:class}.
The surface $\bar S=\pp^2$ can be naturally identified with one of the irreducible component of $T$; we denote by $\pi\colon \bar S\to \cQ$  the degree 2 map induced by this identification.   The double locus $\bar D\subset \bar S$ is the union of two conics, $C_1:=\pi^*H$ and $C_2:=\pi^*(\tau^*H)$,  that are identified with one another by the involution $\iota$ of $C_1\sqcup C_2$ induced by the map $\bar S\to S$. 

We claim that the surface $S$ belongs to the family constructed in  Example \ref{ex:P}.
   Let $R_1,R_2$  be the intersection points of $H$ and $\tau^*H$ in $\cQ$ and write $\pi\inv(R_1)=\{P_1, P_3\}$ and $\pi\inv(R_2)=\{P_2,P_4\}$. 
  The points $P_1, \dots P_4$ are the base points  of the pencil of conics spanned by $C_1$ and $C_2$. By construction,  the 
   involution $\iota$  of $C_1\sqcup C_2$   lifts the involution  of $H+\tau^*H$ given by $\tau$. 
    The involution $\tau$ lifts to an automorphism of $\bar S$ that exchanges the sets $\pi\inv(R_1)$ and $\pi\inv(R_2)$ and exchanges the conics $C_1$ and $C_2$. Elementary arguments on pencils of plane conics show that such a map  is 
     either the that  automorphism  $\phi$ that  induces a cyclic permutation  of $P_1,\dots P_4$ or its inverse $\phi^3$. So, possibly up to relabelling the $P_i$, the involution $\iota$ of $C_1\sqcup C_2$ induced by the normalization map $\bar S\to S$ switches $C_1$ and $C_2$ and identifies $C_1$ with $C_2$ via $\phi$. 
 Since letting  $H$ vary in the pencil of plane sections through $R_1$ and $R_2$ we can obtain any conic in the pencil  spanned by $C_1$ and $C_2$, we have proven the following:
     
 \begin{prop} \label{prop:Psmooth}
 The  surfaces  in the family of Example \ref{ex:P} are  smoothable.
 \end{prop}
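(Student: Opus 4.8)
The plan is to show that the entire family of stable Godeaux surfaces described in Example \ref{ex:P} arises as the flat limit described in construction (2) above, so that smoothability follows from Proposition \ref{prop:bidouble-godeaux}. The final sentence of the excerpt already states that, up to relabelling the points $P_i$, the surface $S$ produced in construction (2) is precisely a surface of Example \ref{ex:P}: its normalization is $\bar S=\pp^2$, the double locus is the union $C_1+C_2$ of two conics through the four base points $P_1,\dots,P_4$, and the gluing involution $\iota$ switches $C_1$ and $C_2$ via the automorphism $\phi$ cyclically permuting the $P_i$. So the only remaining point is to see that, by varying $H$, we realize \emph{every} member of the Example \ref{ex:P} family this way.

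First I would recall that in Example \ref{ex:P} the data consist of four independent points $P_1,\dots,P_4\in\pp^2$, together with a choice of smooth conic $C\in\cF\setminus\{C_0\}$ through those points, where $\cF$ is the pencil of conics through $P_1,\dots,P_4$; the surface $S$ is then glued from $\bar D=C+\phi_*C$ with the specified $\iota$. Since any four independent points can be moved to any other four by an element of $\Aut(\pp^2)=\mathrm{PGL}_3$, and $\mathrm{PGL}_3$ acts transitively on such configurations, it suffices to produce, for a \emph{fixed} configuration $P_1,\dots,P_4$, every conic $C$ in the pencil (other than the distinguished $C_0$ and the two reducible members). Now in construction (2) the conic $C_1=\pi^*H$ depends on the plane section $H$ of $\cQ$ through the two points $R_1,R_2$ (the images of $\{P_1,P_3\}$ and $\{P_2,P_4\}$ under $\pi$). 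As $H$ ranges over the pencil of plane sections of $\cQ$ through $R_1$ and $R_2$, the pullback $\pi^*H$ ranges over the pencil of conics in $\bar S=\pp^2$ through $P_1,\dots,P_4$, which is exactly $\cF$. This is the observation recorded just before the proposition, and it shows that every conic in $\cF$ (hence every admissible $C$) is attained.

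The conclusion is then immediate: every surface $S$ of Example \ref{ex:P} occurs as the surface $S$ attached to the bidouble cover construction of \S\ref{ssection:deg} in case (2), hence (by Proposition \ref{prop:bidouble-godeaux} and Lemma \ref{lem:gorenstein}, which guarantee these $S$ are flat degenerations of the general Godeaux surface with an Enriques involution) it lies in the closure of the locus $\mathcal{GE}$ of smooth Godeaux surfaces. Therefore $S$ is smoothable. I do not expect any real obstacle here: the genuine content was already carried out in the paragraphs preceding the proposition statement (the identification of the normalization, double locus and gluing involution, and the pencil-of-conics argument matching $\phi$ and its inverse), and the proof of the proposition is essentially a one-line bookkeeping remark. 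The one point to be careful about is the parenthetical exclusions (the conic $C_0$ and the reducible members $L(P_1,P_3)+L(P_2,P_4)$, $L(P_1,P_2)+L(P_3,P_4)$, etc.), which correspond to special positions of $H$; but since Example \ref{ex:P} itself only asks for a general $C\in\cF\setminus\{C_0\}$ and ``smoothable'' is a property of the general member of the family, these loci of lower dimension are harmless.
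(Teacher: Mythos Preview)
Your proposal is correct and follows exactly the paper's approach: the paper's proof is in fact nothing more than the sentence immediately preceding the proposition (``Since letting $H$ vary in the pencil of plane sections through $R_1$ and $R_2$ we can obtain any conic in the pencil spanned by $C_1$ and $C_2$, we have proven the following''), together with the preceding identification of $(\bar S,\bar D,\iota)$ with the data of Example~\ref{ex:P}. Your additional remarks (the $\mathrm{PGL}_3$-transitivity on ordered $4$-tuples in general position, and the handling of the excluded special conics) are reasonable elaborations but are not needed beyond what the paper already records.
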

\bigskip

(3) {\em $B_1$  and $B_2$ have a common component  which is a hyperplane section.}\\
Take $B_1=H_0+H_1$, where $H_0$ is a $\tau$-invariant hyperplane section and $H_1$ is a general one, so that $B_2=H_0+\tau^*H_1$,  and take $B_3$ general. Assume that $H_0$ does not contain the vertex $Q_0$ of $\cQ$, hence $H_0$  contains the two smooth fixed points $Q_1$ and $Q_2$ of $\tau$. By Table 2 of \cite[\S3]{rita-valery_non-normal} the singularities of $T$ over $Q_1$ and $Q_2$ are not Gorenstein, so $\nu(T)=2$ by Lemma \ref{lem:gorenstein}, and it follows that  $S$ is not Gorenstein either.

By \cite[\S3]{rita-abel}, the normalization $\bar T$ of $T$ is the bidouble cover of $\cQ$ branched on $H_1$, $\tau^*H_1$, $B_3+H_0$ and the vertex $Q_0$ of $\cQ$. The surface $\bar T$ has a pair of  singular points of type $A_1$ over $Q_1$ and over $Q_2$ and is smooth elsewhere.  By the Hurwitz formula the canonical class $K_{\bar T}$ is numerically equivalent to $0$. Taking base change of $\bar T\to \cQ$  with the minimal resolution $\mathbb F_2\to \cQ$ one obtains a  flat bidouble cover $T_0\to \mathbb F_2$.  The standard formulae for double covers give $p_g(T_0)=q(T_0)=0$, hence $\bar T$ is an Enriques surface with four nodes. The involution $g_1\rho$ of $T$ 
induces an involution of the minimal desingularization $\wt T$ of $T$, whose fixed locus is contained in the preimages of the points $Q_0,Q_1,Q_2\in \cQ$. The preimage of $Q_0$ consists of two smooth points, while the preimage of $\{Q_1,Q_2\}$ is the disjoint union of four nodal curves. Assume that one of these nodal curves is preserved by $g_1\rho$;  then a local computation shows that this curve is not fixed pointwise by $g_1\rho$. Summing up,  the fixed locus of $g_1\rho$ on $\wt T$ is finite. It follows that the quotient surface $\wt T/g_1\rho$ is again an Enriques surface, and so is $\bar S$, since it is birational to $\wt T/g_1\rho$.

This example shows that if we remove the assumption that $S$ is Gorenstein, then Theorem \ref{thm:class} does not hold any more. 
\bigskip

(4) {\em $B_1=H_1+2F_1$, where $H_1$ is a general hyperplane section and $F_1$ is a general ruling of $\mathcal Q$.}\\
Set $H_2=\tau^*H_1$, $F_2=\tau^*F_1$, so that $B_2=H_2+2F_2$. The surface $T$ is singular above $F_1$ and $F_2$. The normalization $\bar T$ of $T$ is a bidouble cover of $\cQ$ branched on the three hyperplane sections $H_1$, $H_2$ and $B_3$, so $K_{\bar T}$ is numerically equivalent to the the pull-back of $-\frac 12 H_1 $, and  $\bar T$ is a del Pezzo surface of degree 2.  The map $\bar T\to\cQ$ is unramified  over the vertex $Q_0$, hence the singularities of $\bar T$ are four  points $U_1,U_2,U_3,U_4$ of type $A_1$ occurring above $Q_0$. The elements $g_1,g_2=g_1\rho^2, \rho^2$ of $D_4$ act  on $U_1,U_2,U_3,U_4$ switching them in pairs, so $\rho$ acts as a cyclic permutation of order $4$ and $g_1\rho$ switches, say, $U_1$ and $U_3$ and fixes $U_2$ and $U_4$.
Looking at the minimal resolution $\wt T$ of $\bar T$, one sees that $g_1\rho$ has two isolated fixed points on each of the   nodal curves corresponding to $U_2$ and $U_4$, hence the fixed locus of  $g_1\rho$ on $\wt T$ is a finite set and the quotient surface $\bar S=\bar T/g_1\rho$ has canonical singularities (the images of $U_2$ and $U_4$ are points of type $A_3$).  Hence $\bar S$ is a del Pezzo surface of degree 1.

The double locus $D_T\subset \bar T$ is the preimage of $F_1+F_2$: it consists of two smooth rational curves $\Gamma_1$ and $\Gamma_2$ meeting transversely at $U_1,\dots U_4$ and it is an antibicanonical curve. The double locus $\bar D\subset \bar S$ is the image of $D_T$: it is an irreducible curve with $p_a=1$, since it is smooth at the images of $U_2$ and $U_4$ and  it has a node at the image point  of $U_1$ and $U_3$. The curve $\bar D$ is  numerically equivalent to an antibicanonical curve, since it pulls back to $D_T$, but it is not Cartier since it is smooth at the $A_3$ points of $\bar S$ (notice also the failure of the usual adjunction formula), hence it is not in $|-2K_{\bar S}|$.    So this case is different from  case $(dP)$ of Theorem \ref{thm:class}. In fact, the surface $S$ is not Gorenstein, since $K_{\bar S}+\bar D$ is not Cartier.

\bibliographystyle{alpha}
 \bibliography{rita}{}

 \end{document}